 \providecommand{\F}{\mathbb{F}}
\title[Folded AG Codes using Class Fields]{Optimal rate algebraic list decoding using narrow ray class fields}
\author[V. Guruswami and C. Xing]{Venkatesan Guruswami \and Chaoping Xing}
\thanks{Research of V.G supported
  in part by NSF grant CCF-0963975, and a Packard Fellowship.}
  \address{Computer Science Department,
  Carnegie Mellon University, Pittsburgh, PA 15213, USA.}
\email{guruswami@cmu.edu}
\address{Division of Mathematical Sciences, School of Physical \&  Mathematical Sciences, Nanyang Technological University, Singapore.}
\email{xingcp@ntu.edu.sg}
\newtheorem{lemma}{Lemma}[section]
\newtheorem{theorem}[lemma]{Theorem}
\newtheorem{prop}[lemma]{Proposition}
\newtheorem{defn}{Definition}
\newtheorem{example}[lemma]{Example}
\newtheorem*{open}{Open Problem}
\theoremstyle{remark}
\newtheorem{rmk}{Remark}
\newcommand{\eps}{\varepsilon}
\renewcommand{\epsilon}{\varepsilon}
\renewcommand{\le}{\leqslant}
\renewcommand{\ge}{\geqslant}
\newcommand{\vnote}[1]{}
\newcommand{\cpnote}[1]{}
\def\ZZ{\mathbb{Z}}
\def\PP{\mathbb{P}}
\def \mD {\mathcal{D}}
\def \mL {\mathcal{L}}
\def \mQ {\mathcal{Q}}
\def \Q {\mathcal{Q}}
\def\cL{{\mathcal L}}
\def\Supp{{\rm Supp}}
\newcommand{\Ga}{\alpha}
\newcommand{\Gb}{\beta}
\newcommand{\Ge}{\epsilon}
\newcommand{\Gk}{\kappa}
\newcommand{\Gl}{\lambda}    
\newcommand{\Gs}{\sigma}
\newcommand{\s}{\sigma}
\def\FH{\mathsf{F}}
\def \by {{\bf y}}
\def\supp {{\rm supp }}
\def \Gal {{\rm Gal}}
\def \Cl {{\rm Cl}}
\def \Fr {{\rm Fr}}
\def \Prin {{\rm Prin}}
\def\sgn {{\rm sgn}}
\def\Aut {{\rm Aut}}
\begin{document}

\maketitle
\thispagestyle{empty}

\begin{abstract}

We use class field theory, specifically Drinfeld modules of rank $1$, to construct a family of asymptotically good algebraic-geometric (AG) codes over fixed alphabets. Over a field of size $\ell^2$, these codes are within $2/(\sqrt{\ell}-1)$ of the Singleton bound. The functions fields underlying these codes are subfields with a cyclic Galois group of the narrow ray class field of certain function fields. The resulting codes are ``folded" using a generator of the Galois group. This generalizes earlier work by the first author on folded AG codes based on cyclotomic function fields. Using the Chebotarev density theorem, we argue the abundance of inert places of large degree in our cyclic extension, and use this to devise a linear-algebraic algorithm to list decode these folded codes up to an error fraction approaching $1-R$ where $R$ is the rate. The list decoding can be performed in polynomial time given polynomial amount of pre-processed information about the function field.

Our construction yields algebraic codes over constant-sized alphabets that can be list decoded up to the Singleton bound --- specifically, for any desired rate $R \in (0,1)$ and constant $\eps > 0$, we get codes over an alphabet size $(1/\eps)^{O(1/\eps^2)}$ that can be list decoded up to error fraction $1-R-\eps$ confining close-by messages to a subspace with $N^{O(1/\eps^2)}$ elements.
Previous results for list decoding up to error-fraction $1-R-\eps$ over constant-sized alphabets were either based on concatenation or involved taking a carefully sampled subcode of algebraic-geometric codes. In contrast, our result shows that these folded algebraic-geometric codes {\em themselves} have the claimed list decoding property.

\end{abstract}

\section{Introduction}

Reed-Solomon codes are a classical and widely used family of
error-correcting codes. They encode messages, which are viewed as
polynomials $f \in \F_q[X]$ of degree $< k$ over a finite field
$\F_q$, into {\em codewords} consisting of the evaluations of $f$ at a
sequence of $n$ distinct elements $\alpha_1,\dots,\alpha_n \in \F_q$
(this requires a field size $q \ge n$). We refer to $n$ as the block lengh of the code. The rate of this code, equal
to the ratio of number of message symbols to the number of codeword
symbols, equals $R = k/n$. Since two distinct polynomials of degree $<
k$ can agree on at most $k-1$ distinct points, every pair of
Reed-Solomon codewords differ on more than $n-k$ positions. In other words, the {\em relative distance} of this code, or the minimum fraction of positions two distinct codewords differ on, is bigger than $(1-R)$.
This means
that even if up to a fraction $(1-R)/2$ of
the $n$ codeword symbols, are corrupted in an {\em arbitrary} manner,
the message polynomial $f$ is still uniquely determined. Moreover,
classical algorithms, starting with \cite{peterson}, can recover the
message $f$ in such a situation in polynomial time.

For a fraction of errors exceeding $(1-R)/2$, unambiguous decoding of
the correct message is not always possible. This holds not just
for the Reed-Solomon code but for {\em every} code. However, if we
allow the decoder to output in the worst-case a small list of messages
whose encodings are close to the corrupted codeword, then it turns out
that one can correct a much larger error fraction. This model is
called {\em list decoding}. Using the probabilistic method, for any
$\eps > 0$, one can prove the abundance of codes of rate $R$ which can
be list decoded up to an error fraction $(1-R-\eps)$ with a maximum
output list size bounded by a constant depending only on $\eps$. This
error fraction is twice the classicial $(1-R)/2$ bound, and further is
optimal as the message has $Rn$ symbols of information and recovering it
up to some small ambiguity is impossible from fewer than a fraction $R$
of correct codeword symbols.

Recent progress in algebraic coding theory has led to the construction
of explicit codes which can be efficiently list decoded up to an error
fraction approaching the $1-R$ information-theoretic limit. The first
such construction, due to Guruswami and Rudra \cite{GR-FRS}, was {\em
  folded Reed-Solomon codes}. In the {\em $m$-folded} version of this
code (where $m$ is a positive integer), the Reed-Solomon (RS) encoding
$(f(1),f(\gamma),\cdots,f(\gamma^{n-1}))$ of a low-degree polynomial
$f \in \F_q[X]$ is viewed as a codeword of length $N = n/m$ over the
alphabet $\F_q^m$ by blocking together successive sets of $m$
symbols. Here $\gamma$ is a primitive element of the field $\F_q$. The
alphabet size of the folded RS codes is $q^m > N^m$. To list decode
these codes up to an error fraction $1-R-\eps$, one has to choose $m
\approx 1/\eps^2$ which makes the alphabet size a larger polynomial in
the block length. In comparison, the probabilistic method shows the
existence of such list decodable codes over an alphabet size
$\exp(O(1/\eps))$, which is also the best possible asymptotic
dependence on $\eps$.

It is possible to bring down the alphabet size of folded RS codes by
concatenating them with appropriate optimal codes found by a
brute-force search, followed by symbol redistribution using an
expander~\cite{GR-FRS}. However, the resulting codes have a large
construction and decoding complexity due to the brute-force decoding
of the inner codes used in concatenation. Furthermore, these codes
lose the nice algebraic nature of folded RS codes which endows them
with other useful features like list recovery and soft decoding. It is
therefore of interest to find explicitly described algebraic codes
over {\em smaller} alphabets with list decoding properties similar to folded
RS codes.

Algebraic-geometric (AG) codes are a generalization of Reed-Solomon
codes based on algebraic curves which have $n \gg q$ $\F_q$-rational
points. These enable construction of RS-like codes with alphabet size
smaller than (and possibly even dependent of) the block length. Thus,
they provide a possible avenue to construct the analog of folded RS
codes over smaller alphabets.

The algebraic crux in list decoding folded RS codes was the identity
$f(\gamma X) \equiv f(X)^q \pmod {E(X)}$ for $E(X) = X^{q-1}-\gamma$
which is an irreducible polynomial over $\F_q$. Extending this to
other algebraic-geometric codes requires finding a similar identity in
the function field setting. As noted by the first
author~\cite{Gur-cyclo}, this can be achieved using Frobenius
automorphisms $\sigma$ in cyclic Galois extensions, and considering
the residue of $f^{\sigma}$ at a place of high degree in the function
field. Using certain subfields of cyclotomic function fields,
Guruswami~\cite{Gur-cyclo} was able to extend the folded RS list
decoding result of \cite{GR-FRS} and obtain folded algebraic-geometric
codes of rate $R$ list decodable up to error fraction $1-R-\eps$ over
an alphabet size $(\log N)^{O(1/\eps^2)}$. In other words, the
alphabet size was reduced to poly-logarithmic in the block length $N$
of the code.

\subsection{Our result}
The main result in this work is a construction of folded
algebraic-geometric codes which brings down the alphabet size to a
{\em constant} depending only on $\eps$. This is based on algebraic function
fields constructed via class field theory, utilizing Drinfeld
modules of rank $1$.

\begin{theorem}[Main]
Let $\ell$ be a square prime power and let $q =\ell^2$. For every $R \in (0,1)$, there is an infinite family of $\F_q$-linear algebraic-geometric codes of rate at least $R$ which has relative distance at least $1-R-2/(\sqrt{\ell}-1)$.

For every pair of integers $m \ge s \ge 1$, the $m$-folded version of
these codes (which is a code over alphabet $\F_q^m$) can be list
decoded from an error fraction
\[ \tau = \frac{s}{s+1} \biggl( 1 - \frac{m}{m-s+1} \Bigl( R + \frac{2}{\sqrt{\ell}-1} \Bigr) \biggr) \ , \]
outputting a subspace over $\F_q$ with at most
$O(N^{(\sqrt{\ell}-1)s})$ elements that includes all message functions
whose encoding is within Hamming distance $\tau N$ from the
input. (Here $N$ denotes the block length of the code.)

Given a
polynomial amount of pre-processed information about the code, the
algorithm essentially consists of solving two linear systems over
$\F_q$, and thus runs in deterministic polynomial time.
\end{theorem}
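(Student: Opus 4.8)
The plan is the now-standard two-phase strategy: first use class field theory to produce a function field with the right Galois-theoretic scaffolding, then run a linear-algebraic list-decoder in the spirit of Guruswami--Wang (for folded Reed--Solomon) and of the first author's cyclotomic-function-field work.

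\emph{Building the code.} Starting from a rational function field over $\F_q$ (or a carefully chosen base function field) equipped with a rank-$1$ Drinfeld module, I would take $F$ to be a subfield of a narrow ray class field whose Galois group over a subfield $E = F^{\langle\sigma\rangle}$ is cyclic of order $m$, with $\sigma$ a fixed generator, arranged so that: (i) a large number $N$ of $\F_q$-rational places of $E$ split completely in $F/E$, producing $n = mN$ rational places $\{P_{i,j}\}$ of $F$ on which $\sigma$ acts as the shift $j \mapsto j+1$; (ii) there is a rational place $P_\infty$ of $F$ fixed by $\sigma$ (disjoint from the $P_{i,j}$); and (iii) the ratio $g_F/n$ is at most (essentially) $2/(\sqrt{\ell}-1)$, which is exactly where one exploits that these ray class fields realize function fields at or near the Drinfeld--Vladut bound, after the passage from $\F_\ell$ to $\F_q = \F_\ell^2$. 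The code encodes $f \in \mathcal{L}(rP_\infty)$ with $r = k + g_F - 1$ (so $\dim_{\F_q}\mathcal{L}(rP_\infty) = k$ and the rate is $R = k/n$) via $f \mapsto ((f(P_{i,1}),\dots,f(P_{i,m})))_{i=1}^N$, and its $m$-folded form is precisely the blocking of the $m$ conjugate evaluations. The distance claim is then immediate from Riemann--Roch: if $f\ne f'$ in $\mathcal{L}(rP_\infty)$ then $f-f'$ has at most $r$ zeros, so the unfolded codewords differ on more than $n-r$ coordinates and the folded ones on more than $(n-r)/m = N - r/m$ coordinates, giving relative distance exceeding $1 - R - (g_F-1)/n \ge 1 - R - 2/(\sqrt{\ell}-1)$.

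\emph{Decoding.} Given a received word $(y_{i,j})$, the first linear system searches for a nonzero tuple $(A_0,A_1,\dots,A_s)$ with $A_0 \in \mathcal{L}(D_1 + rP_\infty)$ and $A_1,\dots,A_s \in \mathcal{L}(D_1)$, $D_1 = dP_\infty$, meeting the $N(m-s+1)$ interpolation constraints $A_0(P_{i,j}) + \sum_{t=1}^s A_t(P_{i,j})\,y_{i,j+t-1} = 0$; Riemann--Roch makes the number of unknowns about $(s+1)d + r - (s+1)(g_F-1)$, so taking $d$ just above $\big(N(m-s+1) - r\big)/(s+1) + g_F$ forces a solution. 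If $f \in \mathcal{L}(rP_\infty)$ has encoding within Hamming distance $\tau N$ of $y$, then since $\sigma$ fixes $P_\infty$ the function $B := A_0 + \sum_{t=1}^s A_t f^{\sigma^{t-1}}$ lies in $\mathcal{L}(D_1 + rP_\infty)$ and vanishes at the $m-s+1$ relevant places on each of the $\ge (1-\tau)N$ agreeing folded coordinates; once $(1-\tau)N(m-s+1) > \deg(D_1 + rP_\infty)$ this forces $B = 0$, and inserting the degree choices (with $g_F/n \le 2/(\sqrt\ell-1)$) turns this inequality into exactly $\tau < \frac{s}{s+1}\big(1 - \frac{m}{m-s+1}(R + \frac{2}{\sqrt\ell-1})\big)$. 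The identity $A_0 + \sum_t A_t f^{\sigma^{t-1}} = 0$ is an $\F_q$-linear (indeed $E$-linear) condition on $f$, so the second linear system just returns the $\F_q$-affine space $W = \{f \in \mathcal{L}(rP_\infty) : A_0 + \sum_t A_t f^{\sigma^{t-1}} = 0\}$, which contains every candidate message. To bound $|W|$ one reduces the relation modulo a place $\mathfrak{P}$ of $F$ at which $\sigma$ acts as a power of the $q$-power Frobenius — such inert places of suitably large degree exist in abundance by the Chebotarev density theorem applied to $F/E$, the key input supplied by the class-field-theoretic set-up — so the relation becomes a linearized ($\F_q$-affine) polynomial equation of bounded $q$-degree over the residue field; a root count for this linearized polynomial, combined with the control the degree of $\mathfrak{P}$ gives on $\mathcal{L}(rP_\infty)$, yields $|W| = O(N^{(\sqrt\ell-1)s})$. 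All steps after precomputing bases of the relevant Riemann--Roch spaces and the incidence data of the $P_{i,j}$ and $\mathfrak P$ are two linear algebra problems over $\F_q$, hence run in deterministic polynomial time.

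\emph{Main obstacle.} The crux is the construction in the first phase: one must exhibit \emph{one} family of function fields that simultaneously has the near-optimal ratio $g/n$, carries a cyclic quotient of the prescribed degree $m$ with many completely split rational places and a $\sigma$-fixed place $P_\infty$, and for which the abundance of Frobenius-generating (``inert'') places of large degree needed by the decoder can actually be \emph{proved} with adequate quantitative control. Supplying all of this at once is exactly what narrow ray class fields and rank-$1$ Drinfeld modules are for; verifying that the resulting field has the advertised parameters — in particular that the genus does not grow too fast relative to the number of completely split rational places, including after the constant-field extension $\F_\ell \to \F_{\ell^2}$ — is the main technical burden. A secondary delicate point is the bookkeeping in the last decoding step that keeps the affine solution space $W$ polynomially bounded (so the output subspace is genuinely small) while still capturing all nearby codewords.
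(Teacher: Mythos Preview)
Your overall architecture matches the paper's: build $F$ via narrow ray class fields so that $F/E$ is cyclic, encode into $\sigma$-orbits of split rational places, run linear-algebraic interpolation, and bound the solution space by reducing modulo places whose Frobenius is $\sigma^{-1}$ (supplied by Chebotarev). But there is a genuine gap in the parameters of your construction that breaks the list-size bound.

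You take the cyclic extension $F/E$ to have degree exactly $m$, the folding parameter. With this choice the block length $N$ equals the number of split rational places of $E$, and your message space $\mathcal{L}(rP_\infty)$ has $\F_q$-dimension $k \approx RmN$. To pin down $f$ by its residue at an inert place $\mathfrak{P}$ you need $\deg_F\mathfrak{P} > r \approx RmN + g_F$, hence the place of $E$ beneath $\mathfrak{P}$ must have degree $u > RN + g_F/m$. The linearized root count then gives $|W| \le q^{u(s-1)} \ge q^{(s-1)RN}$, which is \emph{exponential} in the block length $N$, not $N^{O((\sqrt{\ell}-1)s)}$. Using several places instead of one does not help: the constraint $\sum_{R\in T}\deg_F R > r$ forces the exponent $u(s-1)|T|$ to be at least $(s-1)r/m = \Theta(N)$ regardless. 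The paper avoids this by making the cyclic degree $e = (\ell^r+1)/(\ell+1)$ \emph{exponentially large} in $n := N(E/\F_\ell)$ (taking $r \approx 2n/(\sqrt{\ell}-1)$), and using only $m$ out of the $e$ conjugates per block; then $N \approx n\lfloor e/m\rfloor$ is itself exponential in $n$, so the list-size bound $q^{O(n(s-1)(\sqrt{\ell}-1))}$ becomes $N^{O((\sqrt{\ell}-1)s)}$. In short, the crucial point you are missing is that the order of $\sigma$ must vastly exceed the folding parameter $m$.

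A secondary issue: you posit a single rational place $P_\infty$ of $F$ fixed by $\sigma$. In the paper's construction no such place exists (the distinguished rational place $\infty$ of $E$ splits completely in $F$, and the only ramified place has large degree). The paper instead uses the $\sigma$-invariant divisor $D = \sum_{P|\infty} P$; your argument goes through with $rP_\infty$ replaced by $lD$, but as written your hypothesis (ii) cannot be met.
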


Picking suitable parameters in the above theorem, specifically $\ell
\approx 1/\eps^2$, $s \approx 1/\eps$, and $m \approx 1/\eps^2$, leads
to folded AG codes with alphabet size $(1/\eps)^{O(1/\eps^2)}$ of any
desired rate $R \in (0,1)$ that are list decodable up to error
fraction $1-R-\eps$ with a maximum output list size bounded by
$N^{O(1/\eps^2)}$. In other words, the polylogarithmic alphabet size
of cyclotomic function fields is further improved to a constant
depending only on $\eps$.

We prove the above theorem by employing the recently developed {\em
  linear-algebraic} approach to list decoding, which was first used to
an alternate, simpler proof of the list decodability of folded RS codes up to
error fractions approaching $1-R$ (see \cite{GW-merged}).

One of the simple but key observations that led to this work is the
following. In order to apply the linear algebraic list decoder for a
folded version of AG codes (such as the cyclotomic function field based
codes of \cite{Gur-cyclo}), one can use the Frobenius automorphism
based argument to just combinatorially {\em bound} the list size, but
such an automorphism is {\em not} needed in the actual decoding
algorithm. In particular, we {\em don't} need to find high degree
places with a specific Galois group element as its Frobenius
automorphism (this was one of the several challenges in the cyclotomic
function field based construction~\cite{Gur-cyclo}), but only need the
{\em existence} of such places.  This allows us to devise a
linear-algebraic list decoder for folded versions of a family of AG
codes, once we are able to construct function fields with certain
stipulated properties (such as many rational places compared to the
genus, and the existence of an automorphism which powers the residue
of functions modulo some places). We then construct function
fields with these properties over a fixed alphabet using class field
theory, which is our main technical contribution.

This gives the first construction of folded AG codes over
constant-sized alphabets list decodable up to the optimal $1-R$ bound,
although we are not able to efficiently construct the (natural)
representation of the code that is utilized by our polynomial time
decoding algorithm. This representation consists of the evaluations of
 regular functions at  the rational places used
for encoding (by a regular function at a place, we mean a function having no pole at this place).

In our previous works \cite{GX-stoc12,GX-gabidulin}, we considered
list decoding of folded AG codes and a variant where rational points
over a subfield are used for encoding. We were able to show that a
{\em subcode} of these codes can be efficiently list decoded up to the
optimal $1-R-\eps$ error fraction. The subcode is picked based on
variants of subspace-evasive sets (subsets of the message space
that have small intersection with low-dimensional subspaces) that are
not explicitly constructed. In contrast, in this work we are able to
list decode the folded AG codes {\em themselves}, and no randomly
constructed subcode is needed.

\subsection{Techniques}

Our main techniques can be summarized as follows.

Our principal algebraic construction is that of an infinite family of
  function fields over a fixed base field $\F_q$ with many rational
  places compared to their genus, together with certain additional properties
  needed for decoding. Our starting point is a family of function
  fields $E/\F_\ell$ (where $\ell = \sqrt{q}$) such as those from the
  Garcia-Stichtenoth towers~\cite{GS95,GS96} which attain the
  Drinfeld-Vl\u{a}dut bound (the best possible trade-off between
  number of rational places and genus).
We consider the constant field extension $L = \F_q \cdot E$, and
 take its narrow ray class field of
 with respect to some high degree place. We descend to a carefully constructed subfield $F$ of this class field in which the $\F_q$-rational places in $L$ split completely, and further the extension $F/L$ has a cyclic Galois group.

A generator $\sigma$ of this cyclic group $\Gal(F/L)$, which is an
automorphism of $F$ of high order, is used to order the evaluation
points in the AG code and then to fold this code. This last part is
similar to the earlier cyclotomic construction, but there the full
extension $F/\F_q(X)$ was cyclic. This was a stringent constraint that
in particular ruled out asymptotically good function fields --- in
fact even abelian extensions must have the ratio of the number of
rational places to genus tend to 0 when the genus grows~\cite{FPS}. In
our construction, only the portion $F/L$ needs to be cyclic, and this
is another insight that we exploit.


Next, using the Chebotarev density theorem, we argue the existence of
many large degree places which are inert in the extension $F/L$ and
have $\sigma$ as their Frobenius automorphism. This suffices to argue
that the list size is small using previous algebraic
techniques. Essentially the values of the candidate message functions
at the inert places mentioned above can be found by finding the roots
of a univariate polynomial over the residue field, and these values
can be combined via Chinese remaindering to identify the message
function.

Under the linear-algebraic approach, the above list will in fact be a
subspace. Thus knowing that this subspace has only polynomially many
elements is enough to list all elements in the subspace in polynomial
time by solving a linear system! To solve the linear system, we make use of
the local power series expansion of a basis of the Riemann-Roch
message space at certain rational places of $F$ (namely those lying above a
rational place of $L$ that splits completely in $F/L$).

%
%

To summarize, some of the novel aspects of this work are:
\begin{enumerate}
\itemsep=1ex
\item The use of class fields based on rank one Drinfeld modules to construct function fields $F/\F_q$ with many $\F_q$-rational places compared to its genus, {\em and} which have a subfield $L$ such that $F/L$ is a cyclic Galois extension of sufficiently high degree.
\item The use of the Chebotarev density theorem to combinatorially bound the list size.
\item Decoupling the {\em proof} of the combinatorial bound on list
  size from the algorithmic task of {\em computing} the list. This
  computational part is tackled by a linear-algebraic decoding
  algorithm whose efficiency automatically follows from the list size
  bound.
\end{enumerate}

\subsection{Organization}
In Section 2, we show a construction of folded algebraic-geometric codes over arbitrary function fields with many rational places and an automorphism of relatively large order. Then we present a linear-algebraic list decoding of the folded codes. Under some assumption about the base function fields, we prove in the same section that the folded codes is deterministically list decodable up to the Singleton bound. Section 3 is devoted to the construction of the base function fields needed in Section 2 for constructing our folded codes. Our construction of the base function fields is through class field theory, specifically Drinfeld modules of rank $1$. In Section 4, we discuss the encoding and decoding of our folded codes by some possible approach of finding explicit equations of the base function fields that are constructed in Section 3. The main result of this paper is then stated after discussion of encoding and decoding.

\section{Linear-Algebraic List Decoding of Folded Algebraic-Geometric Codes}
\label{sec:ALD}
In this section, we first present a construction of folded algebraic geometric codes over arbitrary function fields with certain properties and then give a deterministic list decoding of folded algebraic geometric codes over certain function fields satisfying some conditions.

\subsection{Preliminaries on Function Fields}
For convenience of the reader, we start with some background on global function fields over finite fields.

For a prime power $q$, let $\F_q$ be the finite field of $q$ elements. An { algebraic function field}  over
$\F_q$ in one variable is a field extension $F \supset \F_q$ such
that $F$ is a finite algebraic extension of  $\F_q(x)$ for some
$x\in F$ that is transcendental over $\F_q$. The field $\F_q$ is called the full constant field of $F$ if the algebraic closure of $\F_q$  in $F$ is $\F_q$ itself. Such a function field is also called a global function field. From now on, we always denote by  $F/\F_q$  a function field $F$ with the full constant field $\F_q$.

Let $\PP_F$ denote the set of places of $F$. The divisor group, denoted by ${\rm Div}(F)$, is the free abelian group generated by all places in $\PP_F$. An element $G=\sum_{P\in\PP_F}n_PP$ of ${\rm Div}(F)$ is called a divisor of $F$, where $n_P=0$ for almost all $P\in\PP_F$. The support, denoted by $\Supp(G)$, of $G$ is the set $\{P\in\PP_F:\; n_P\neq 0\}$. For a nonzero function $z\in F$, the principal divisor  of $z$ is defined to be
${\rm div}(z)=\sum_{P\in\PP_F}\nu_P(z)P$, where $\nu_P$ denotes the normalized discrete valuation at $P$. The zero and pole divisors of $z$ are defined to be ${\rm div}(z)_0=\sum_{\nu_P(z)>0}\nu_P(z)P$ and ${\rm div}(z)_{\infty}=-\sum_{\nu_P(z)<0}\nu_P(z)P$, respectively.

For a divisor $G$ of $F$, we define the Riemann-Roch space associated with $G$ by
\[\mL(G):=\{f\in F^*:\; {\rm div}(f)+G\ge 0\}\cup\{0\}.\]
Then $\mL(G)$ is a finite dimensional space over $\F_q$ and its
dimension $\ell(G)$ is determined by the Riemann-Roch theorem which
gives
\[\ell(G)=\deg(G)+1-g+\ell(W-G),\]
where $g$ is the genus of $F$ and $W$ is a canonical divisor of degree $2g-2$. Therefore, we
always have that $\ell(G)\ge \deg(G)+1-g$ and the equality holds
if $\deg(G)\ge 2g-1$.

For a function $f$ and a place $P\in\PP_F$ with $\nu_P(f)\ge 0$, we denote by $f(P)$ the residue class of $f$ in the residue class field $F_P$ at $P$. For an automorphism $\phi\in \Aut(F/\F_q)$ and a place $P$, we denote by $P^{\phi}$ the place $\{\phi(x):\; x\in P\}$. For a function $f\in F$, we denote by $f^{\phi}$ the action of $\phi$ on $f$. If $\nu_P(f)\ge 0$ and $\nu_{P^{\phi}}(f)\ge 0$, then one has that $\nu_P(f^{\phi^{-1}})\ge 0 $ and $f(P^{\phi})=f^{\phi^{-1}}(P)$. Furthermore, for a divisor $G=\sum_{P\in\PP_F}m_PP$ we denote by $G^{\phi}$ the divisor $\sum_{P\in\PP_F}m_PP^{\phi}$.

\subsection{Folded Algebraic Geometric Codes}
 To construct our folded codes, we assume that there exists  a global function field $F$ with the full constant field $\F_q$ having the following property:
 \begin{center}
{\bf Property (P1)}
 \end{center}
\begin{itemize}
\item [(i)]  There exists  an automorphism $\Gs$ in $\Aut(F/\F_q)$;
\item [(ii)] $F$ has $mN$ distinct rational places $P_1, P_1^{\Gs},\dots,  P_1^{\Gs^{m-1}}, P_2, P_2^{\Gs},\dots,$ $  P_2^{\Gs^{m-1}}, \dots, $ $P_N, P_N^{\Gs},\dots,  P_N^{\Gs^{m-1}}$;
    \item [(iii)] $F$ has a divisor $D$ of degree $e$ such that $D$ is fixed under $\Gs$, i.e., $D^{\Gs}=D$; and $P_i^{\Gs^j}\not\in\Supp(D)$ for all $1\le i\le N$ and $0\le j\le m-1$.
\end{itemize}

A folded algebraic geometric code can be defined as follows.
\begin{defn}[Folded AG codes]
\label{def:f-ag-code}{\rm
The  folded  code from $F$ with parameters $N,l,q,e,m$, denoted by ${\FH}(N,l,q,e,m)$,  encodes a message function $f \in \cL(lD)$  as
\begin{equation}
\label{eq:f-ag-defn} \pi:\quad
f \mapsto
\left(
\left[\begin{array}{c}  f(P_1) \\  f(P_1^{\s}) \\ \vdots \\  f(P_1^{\s^{m-1}})\end{array}\right],
\left[\begin{array}{c} f(P_2) \\  f(P_2^{\s}) \\ \vdots \\  f(P_2^{\s^{m-1}})\end{array}\right],
\ldots,
\left[\begin{array}{c} f(P_N) \\  f(P_N^{\s}) \\ \vdots \\  f(P_N^{\s^{m-1}})\end{array}\right]
\right)  \in \left( \F_{q}^m \right)^{N} \ .
\end{equation}
}\end{defn}
Note that the folded code ${\FH}(N,l,q,e,m)$ has the alphabet $\F_q^m$ and it is $\F_q$-linear. Furthermore, ${\FH}(N,l,q,e,m)$ has the following parameters.
\begin{lemma}\label{lem:para} If $le<mN$, then
the above code ${\FH}(N,l,q,e,m)$ is an $\F_q$-linear code with alphabet size $q^{m}$, rate at least $\frac{le-g+1}{Nm}$, and minimum distance at least $N  - \frac{le}{m}$.
\end{lemma}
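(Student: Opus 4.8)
The plan is to verify the three claimed parameters — alphabet size, rate lower bound, and minimum distance lower bound — essentially by unwinding the definition of the folded code and invoking the Riemann-Roch theorem.

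First I would record the trivial facts. The encoding map $\pi$ sends $f \in \cL(lD)$ into $(\F_q^m)^N$, so by construction the alphabet is $\F_q^m$, which has size $q^m$, and $\pi$ is $\F_q$-linear since $f \mapsto (f(P_i^{\sigma^j}))$ is $\F_q$-linear and the residue maps are defined (here we use part (iii) of Property (P1): since $P_i^{\sigma^j} \notin \Supp(D)$, we have $\nu_{P_i^{\sigma^j}}(lD) = 0$, so every $f \in \cL(lD)$ is regular at each evaluation place and $f(P_i^{\sigma^j})$ makes sense). This takes care of the alphabet-size claim.

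Next, the rate. The rate equals $\frac{\dim_{\F_q}(\text{image})}{Nm}$ (the block length is $N$ over $\F_q^m$, i.e.\ $Nm$ over $\F_q$). So it suffices to show $\pi$ is injective on $\cL(lD)$ and to lower bound $\dim_{\F_q}\cL(lD) = \ell(lD)$. For injectivity: if $\pi(f) = 0$ then $f$ vanishes at all $mN$ rational places $P_i^{\sigma^j}$, so $f \in \cL(lD - \sum_{i,j} P_i^{\sigma^j})$; this divisor has degree $le - mN < 0$ by hypothesis, hence $f = 0$. (One should note $\deg(D) = e$ is the degree as stated, so $\deg(lD) = le$; and the $mN$ evaluation places are distinct rational places not in $\Supp(D)$, so the subtraction is legitimate and drops the degree by exactly $mN$.) For the dimension bound, Riemann-Roch gives $\ell(lD) \ge \deg(lD) + 1 - g = le - g + 1$, so the rate is at least $\frac{le - g + 1}{Nm}$, as claimed.

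Finally, the minimum distance. Since the code is $\F_q$-linear over the alphabet $\F_q^m$, the minimum distance equals the minimum Hamming weight (number of nonzero blocks among the $N$ blocks) of a nonzero codeword. Take $0 \neq f \in \cL(lD)$. A block $i$ is zero iff $f(P_i^{\sigma^j}) = 0$ for all $0 \le j \le m-1$, i.e.\ iff all $m$ of the places $P_i, P_i^\sigma, \dots, P_i^{\sigma^{m-1}}$ are zeros of $f$. If the codeword $\pi(f)$ has weight $w$, then $f$ vanishes at $(N - w)m$ of the evaluation places, so $f \in \cL\bigl(lD - \sum (N-w)m \text{ places}\bigr)$, a space with a divisor of degree $le - (N-w)m$. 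For $f \neq 0$ this degree must be $\ge 0$, giving $w \ge N - \frac{le}{m}$. Hence the minimum distance is at least $N - \frac{le}{m}$.

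I do not anticipate a genuine obstacle here; the only mild subtlety is bookkeeping about which places lie in $\Supp(D)$ (handled by (P1)(iii)) and the fact that distinctness of the $mN$ rational places (part (ii)) is what lets each vanishing condition cut the Riemann-Roch dimension by the full count. The automorphism $\sigma$ and the $\sigma$-invariance $D^\sigma = D$ play no essential role in \emph{this} lemma — they matter for the folded structure and later decoding, not for these basic parameters — so I would not invoke them beyond using (P1)(ii)–(iii) to name the evaluation places.
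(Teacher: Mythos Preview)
Your proposal is correct and follows essentially the same approach as the paper's own proof: injectivity via the kernel lying in a Riemann--Roch space of negative degree, the rate bound via Riemann--Roch, and the minimum distance via showing that a codeword of weight $w$ forces $f$ into $\cL\bigl(lD-\sum_{i\notin I}\sum_{j}P_i^{\sigma^j}\bigr)$ with $|I|=w$. Your write-up is in fact slightly more careful than the paper's about why the evaluation places avoid $\Supp(D)$ (via (P1)(iii)) and why the $mN$ places are distinct (via (P1)(ii)).
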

\begin{proof} It is clear that the map $\pi$ in  (\ref{eq:f-ag-defn}) is $\F_q$-linear and  the kernel of $\pi$ is
\[ \cL\Bigl( lD-\sum_{i=1}^N\sum_{j=0}^{m-1}P_i^{\s^j}\Bigr) \]
 which is $\{0\}$ under the condition that $le<mN$. Thus, $\pi$ is injective. Hence, the rate is at least $\frac{le-g+1}{Nm}$ by  the Riemann-Roch theorem. To see the minimum distance, let $f$ be a nonzero function in $\cL(lD)$ and assume that $I$ is the support of $\pi(f)$. Then the Hamming weight ${\rm wt}_H(\pi(f))$ of $\pi(f)$ is $|I|$ and  $f\in \cL\left(lD-\sum_{i\not\in I}\sum_{j=0}^{m-1}P_i^{\s^j}\right)$. Thus, $0\le \deg\left(lD-\sum_{i\not\in I}\sum_{j=0}^{m-1}P_i^{\s^j}\right)=le-m(N-|I|)$, i.e., ${\rm wt}_H(\pi(f))=|I|\ge N-\frac{le}{m}$. This completes the proof.
\end{proof}
\subsection{List Decoding of Folded Algebraic Geometric Codes}

Suppose a codeword (\ref{eq:f-ag-defn}) encoded from $f\in  \cL(lD)$ was transmitted and received as
\begin{equation}
\label{eq:recd-word}
\mathbf{y} =
\left(
\begin{array}{ccccc}
y_{1,1} & y_{2,1} & & & y_{N,1}\\
y_{1,2} & y_{2,2} & & & \vdots\\
& & & \ddots & \\
 y_{1,m} & \cdots &&& y_{N,m}
\end{array}
\right),
\end{equation}
where some columns are erroneous.
Let $s \ge 1$ be an integer parameter associated with the decoder.
\begin{lemma}
\label{lem:herm-interpolation}
Given a received word as in {\rm (\ref{eq:recd-word})},  we can find a nonzero linear polynomial in $F[Y_1,Y_2,\dots,Y_s]$ of the form
\begin{equation}
\label{eq:form-of-Q}
Q(Y_1,Y_2,\dots,Y_s) =
A_0 + A_1 Y_1 + A_2 Y_2 + \cdots + A_s Y_s \
\end{equation}
satisfying
\begin{equation}
\label{eq:interpolation-cond}
Q(y_{i,j+1},y_{i,j+2},\cdots,y_{i,j+s}) =A_0(P_i^{\s^{j}})+A_1(P_i^{\s^{j}})y_{i,j+1}+\cdots+A_s(P_i^{\s^{j}})y_{i,j+s}= 0
\end{equation}
{for } $i=1,2,\dots,N$ { and }  $j =0,1,\dots,m-s$. The coefficients $A_i$ of $Q$ satisfy
$A_i \in \cL(\kappa D)$ for $i=1,2,\dots,s$ and $A_0\in \cL((\Gk+l)D)$ for a ``degree" parameter $d$ chosen as
\begin{equation}
\label{eq:choice-of-kappa}
 \Gk=\left\lfloor \frac {N(m-s+1)-el+(s+1)(g-1)+1}{e(s+1)}\right\rfloor .
\end{equation}
\end{lemma}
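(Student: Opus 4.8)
The plan is to set up the interpolation equations (\ref{eq:interpolation-cond}) as a homogeneous $\F_q$-linear system in the coefficients of the $A_i$'s and show that the number of unknowns strictly exceeds the number of constraints, so that a nonzero solution exists. First I would parametrize the search space: the coefficients $A_1,\dots,A_s$ are sought in $\cL(\kappa D)$ and $A_0$ in $\cL((\kappa+l)D)$. By the Riemann--Roch theorem, $\dim_{\F_q}\cL(\kappa D) \ge \kappa e - g + 1$ and $\dim_{\F_q}\cL((\kappa+l)D) \ge (\kappa+l)e - g + 1$ (and these are the exact dimensions once the degrees are $\ge 2g-1$, which we may arrange). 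Hence the total number of $\F_q$-degrees of freedom is at least
\[
\bigl((\kappa+l)e - g + 1\bigr) + s\bigl(\kappa e - g + 1\bigr) = (s+1)\kappa e + le - (s+1)(g-1) .
\]

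Next I would count the constraints. For each pair $(i,j)$ with $1 \le i \le N$ and $0 \le j \le m-s$, equation (\ref{eq:interpolation-cond}) is a single $\F_q$-linear condition on the coefficients (the point is that $A_t(P_i^{\s^j})$ is a fixed $\F_q$-linear functional of the coefficients of $A_t$, since $P_i^{\s^j}$ is a rational place not in $\Supp(D)$, so evaluation is well defined; the $y_{i,j+t}$ are known scalars). There are $N(m-s+1)$ such pairs, hence at most $N(m-s+1)$ homogeneous linear equations over $\F_q$. A nonzero $Q$ of the stipulated form therefore exists provided
\[
(s+1)\kappa e + le - (s+1)(g-1) \;>\; N(m-s+1),
\]
i.e. $(s+1)\kappa e > N(m-s+1) - le + (s+1)(g-1)$, i.e. $\kappa > \frac{N(m-s+1) - el + (s+1)(g-1)}{e(s+1)}$. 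Taking $\kappa$ to be the floor of $\frac{N(m-s+1)-el+(s+1)(g-1)+1}{e(s+1)}$ as in (\ref{eq:choice-of-kappa}) makes the strict inequality hold (the $+1$ in the numerator ensures the floor still satisfies the strict bound), which is exactly the claim. Since a nonzero element of the kernel of a homogeneous linear system over $\F_q$ can be found by Gaussian elimination, the "we can find" assertion is algorithmic as well.

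The one genuinely delicate point — the main obstacle — is justifying that evaluation $A_t \mapsto A_t(P_i^{\s^j})$ is a bona fide $\F_q$-linear functional on $\cL(\kappa D)$ (resp. $\cL((\kappa+l)D)$): this requires that none of the places $P_i^{\s^j}$ lies in $\Supp(D)$, which is guaranteed by Property~(P1)(iii), so that every function in the Riemann--Roch space is regular at $P_i^{\s^j}$ and its residue there is well defined. Given that, linearity of evaluation and of $Q$ in the $A_t$ makes each (\ref{eq:interpolation-cond}) linear, and the dimension count above closes the argument. I would also remark that the inequality $le < mN$ implicitly needed for the code to be sensible, together with $m \ge s$, keeps $\kappa$ nonnegative in the regimes of interest, though the statement as written does not require $\kappa \ge 1$ — one only needs the space of $Q$'s to be nontrivial, which the counting delivers.
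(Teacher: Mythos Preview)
Your argument is exactly the paper's: parametrize $A_0\in\cL((\kappa+l)D)$ and $A_1,\dots,A_s\in\cL(\kappa D)$, lower-bound the total $\F_q$-dimension via Riemann--Roch, and compare with the $N(m-s+1)$ homogeneous linear constraints coming from (\ref{eq:interpolation-cond}). One small correction: your parenthetical that ``the $+1$ in the numerator ensures the floor still satisfies the strict bound'' is not right---the integer inequality $(s+1)\kappa e > N(m-s+1)-le+(s+1)(g-1)$ is equivalent to $\kappa \ge \frac{N(m-s+1)-le+(s+1)(g-1)+1}{e(s+1)}$, so the minimal admissible $\kappa$ is the \emph{ceiling} of that fraction, not its floor (the paper's statement has the same slip, and it is immaterial for the later asymptotic error-fraction analysis).
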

\begin{proof} Let $u$ and $v$ be dimensions of $\cL(\Gk D)$ and $\cL((\Gk+l)D)$, respectively. Let $\{x_1,\dots,x_u\}$ be an $\F_q$-basis of $\cL(\Gk D)$ and extend it to an $\F_q$-basis $\{x_1,\dots,x_v\}$ of  $\cL((d+l)D)$. Then $A_i$ is an $\F_q$-linear combination of $\{x_1,\dots,x_u\}$ for $i=1,2,\dots,s$ and $A_0$ is an $\F_q$-linear combination of $\{x_1,\dots,x_v\}$. Determining the functions $A_i$ is equivalent to determining the coefficients in the combinations of $A_i$. Thus, there are totally $su+v$ freedoms to determine $A_0,A_1,\dots,A_s$. By the Riemann-Roch theorem, the number of freedoms is at least $s(\Gk e-g+1)+(\Gk+l)e-g+1$.

On the other hand, there are totally $N(m-s+1)$ equations in (\ref{eq:interpolation-cond}). Thus, there must be one nonzero solution by the condition (\ref{eq:choice-of-kappa}), i.e., $Q(Y_1,Y_2,\dots,Y_s)$ is a nonzero polynomial.
\end{proof}

\begin{lemma}
\label{lem:Q-is-good}
If $f$ is a function in $\cL(lD)$ whose encoding (\ref{eq:f-ag-defn}) agrees
with the received word $\mathbf{y}$ in at least $t$ columns with
\[ t>\frac{(\Gk+l)e}{m-s+1} \ , \]
 then $Q(f,f^{\s^{-1}},\dots,f^{\s^{-(s-1)}})$ is the zero function, i.e., \begin{equation}\label{eq:alg-eqn}A_0+A_1f+A_2f^{\s^{-1}}+\cdots+A_sf^{\s^{-(s-1)}}=0.\end{equation}
\end{lemma}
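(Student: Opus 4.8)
The plan is to show that the function $g := Q(f, f^{\sigma^{-1}}, \dots, f^{\sigma^{-(s-1)}}) = A_0 + A_1 f + A_2 f^{\sigma^{-1}} + \cdots + A_s f^{\sigma^{-(s-1)}}$ lies in a Riemann--Roch space of degree too small to contain a nonzero function, provided it vanishes at sufficiently many rational places; the hypothesis $t > \frac{(\kappa+l)e}{m-s+1}$ will supply exactly that many zeros.

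First I would control the pole divisor of $g$. Since $A_i \in \mL(\kappa D)$ for $1 \le i \le s$ and $A_0 \in \mL((\kappa+l)D)$, and since $f \in \mL(lD)$ with $D^{\sigma} = D$ (so that $f^{\sigma^{-j}} \in \mL(lD^{\sigma^j}) = \mL(lD)$ for each $j$), every term $A_i f^{\sigma^{-(i-1)}}$ lies in $\mL((\kappa+l)D)$, and hence $g \in \mL((\kappa+l)D)$. Thus $\deg(\mathrm{div}(g)_\infty) \le (\kappa+l)e$.

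Next I would produce zeros of $g$. Suppose the encoding of $f$ agrees with $\mv{y}$ in column $i$, i.e. $f(P_i^{\sigma^j}) = y_{i,j+1}$ for all $0 \le j \le m-1$. Using the identity $f(P^{\phi}) = f^{\phi^{-1}}(P)$ recalled in the preliminaries, for each $j$ with $0 \le j \le m-s$ we have $f^{\sigma^{-r}}(P_i^{\sigma^j}) = f(P_i^{\sigma^{j+r}}) = y_{i,j+r+1}$ for $0 \le r \le s-1$. Plugging this into the interpolation condition (\ref{eq:interpolation-cond}), which asserts $A_0(P_i^{\sigma^j}) + A_1(P_i^{\sigma^j}) y_{i,j+1} + \cdots + A_s(P_i^{\sigma^j}) y_{i,j+s} = 0$, we get $g(P_i^{\sigma^j}) = 0$. (Here one should note the coefficients $A_r$ are regular at each $P_i^{\sigma^j}$ since these places avoid $\Supp(D)$, so the evaluations make sense.) Hence each agreeing column contributes $m-s+1$ distinct rational places among the $P_i^{\sigma^j}$ at which $g$ vanishes, and since the $mN$ places are all distinct, $t$ agreeing columns give at least $t(m-s+1)$ zeros of $g$.

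Finally, if $g \ne 0$ then $\deg(\mathrm{div}(g)_0) = \deg(\mathrm{div}(g)_\infty) \le (\kappa+l)e$, so the number of zeros is at most $(\kappa+l)e$. Combining, $t(m-s+1) \le (\kappa+l)e$, i.e. $t \le \frac{(\kappa+l)e}{m-s+1}$, contradicting the hypothesis. Therefore $g = 0$, which is (\ref{eq:alg-eqn}). The only mildly delicate point is the bookkeeping that ties the agreement in a column to zeros of $g$ via the $f(P^\phi) = f^{\phi^{-1}}(P)$ identity and the $\sigma$-invariance of $D$; everything else is a direct degree count against the Riemann--Roch space $\mL((\kappa+l)D)$.
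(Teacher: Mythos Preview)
Your proof is correct and follows essentially the same approach as the paper: show $g \in \mL((\kappa+l)D)$ using $\sigma$-invariance of $D$, turn each agreeing column into $m-s+1$ zeros of $g$ via the identity $f(P^{\sigma^{j+r}}) = f^{\sigma^{-r}}(P^{\sigma^j})$ and the interpolation conditions, then conclude $g=0$ by a degree count. The only cosmetic difference is that the paper phrases the final step as $g \in \mL\bigl((\kappa+l)D - \sum_{i \in I}\sum_{j=0}^{m-s} P_i^{\sigma^j}\bigr)$ with a negative-degree divisor, whereas you compare $\deg(\mathrm{div}(g)_0)$ with $\deg(\mathrm{div}(g)_\infty)$; these are equivalent.
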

\begin{proof}  Since $D=D^{\s}$, we have $f^{\s^i}\in \cL(lD)$ for all $i\in \ZZ$. Thus, it is clear that $Q(f,f^{\s^{-1}},\dots,f^{\s^{-(s-1)}})$ is a function in $\cL((\Gk+l)D)$.

Let us assume that $I\subseteq\{1,2,\dots,N\}$ is the index set such that the $i$th columns  of $\by$ and $\pi(f)$ agree if and only if $i\in I$.  Then we have $|I|\ge t$. For every $i\in I$ and $0\le j\le m-s$, we have by (\ref{eq:interpolation-cond})
\begin{eqnarray*}
0&=&A_0(P_i^{\s^{j}})+A_1(P_i^{\s^{j}})y_{i,j+1}+A_2(P_i^{\s^{j}})y_{i,j+2}+\cdots+A_s(P_i^{\s^{j}})y_{i,j+s}\\
&=&A_0(P_i^{\s^{j}})+A_1(P_i^{\s^{j}})f(P_i^{\s^{j}})+A_2(P_i^{\s^{j}})f(P_i^{\s^{j+1}}))+\cdots+A_s(P_i^{\s^{j}})f(P_i^{\s^{j+s-1}})\\
&=&A_0(P_i^{\s^{j}})+A_1(P_i^{\s^{j}})f(P_i^{\s^{j}})+A_2(P_i^{\s^{j}})f^{\s^{-1}}(P_i^{\s^{j}})+\cdots+A_s(P_i^{\s^{j}})f^{\s^{-s+1}}(P_i^{\s^{j}})\\
&=&\left(A_0+A_1f+A_2f^{\s^{-1}}+\cdots+A_sf^{\s^{-s+1}}\right)(P_i^{\s^{j}}),\end{eqnarray*}
 i.e., $P_i^{\s^{j}}$ is a zero of $Q(f,f^{\s},\dots,f^{\s^{s-1}})$. Hence, $Q(f,f^{\s^{-1}},\dots,f^{\s^{-(s-1)}})$ is a function in $\cL\left((\Gk+l)D-\sum_{i\in I}\sum_{j=0}^{m-s}P_i^{\s^{j}}\right)$. Our desired result follows from the fact that $\deg\left((\Gk+l)D-\sum_{i\in I}\sum_{j=0}^{m-s}P_i^{\s^{j}}\right)<0$.
\end{proof}


%
%

By Lemma \ref{lem:Q-is-good}, we know that all candidate functions $f$ in our list must satisfy the equation (\ref{eq:alg-eqn}). In other words,  we have to study the solution set of  the equation (\ref{eq:alg-eqn}). In our previous work \cite{GX-stoc12}, to upper bound the list size, we analyzed the solutions of the equation (\ref{eq:alg-eqn}) by considering local expansions at a certain point. This local expansion method only guarantees  a structured list of exponential size. Through precoding by using the structure in the list, we were able to obtain a Monte Carlo construction of subcodes of these codes with polynomial time list decoding.
The other method used in \cite{GR-FRS} for decoding the Reed-Solomon codes is to construct an irreducible polynomial $h(x)$ of degree $q-1$ such that every polynomial $f$ satisfies $f^{\s^{-1}}\equiv f^{q} \mod{h} $. Then the solution set of (\ref{eq:alg-eqn}) is the same as the solution set of the equation $ A_0+A_1f+A_2f^{q}+\cdots+A_sf^{q^{s-1}} \equiv 0 \mod{h}$ since $\deg(f)<q-1=\deg(h)$. Thus, there are at most $q^{s-1}$ solutions for the equation (\ref{eq:alg-eqn}). In order to generalize the latter idea used for the Reed-Solomon code to upper bound our list size of our folded algebraic geometric codes, we require some further property that $F$ must satisfy.
\begin{center}
{\bf Property (P2)}
 \end{center}
\begin{itemize}
\item [(i)]  There exists  a finite set $T$ of places of $F$ such that $\supp(D)\cap T=\emptyset$ and every place in $T$ has the same degree;
\item [(ii)] There exists an integer $u> 0$ such that $f^{\s^{-1}}\equiv f^{q^u} \mod{R} $, i.e., $f^{\s^{-1}}(R)\equiv f(R)^{q^u}$ for every $R\in T$ and all $f\in F$ with $\nu_R(f)\ge 0$.
    \item [(iii)] $\sum_{R\in T}\deg(R)>le$.
\end{itemize}

\begin{lemma}\label{lem:first-list-size}
Assume that $F$ satisfies (P1) and (P2), then the solution set of the equation {\rm (\ref{eq:alg-eqn})} has size at most $q^{u(s-1)|T|}$.
\end{lemma}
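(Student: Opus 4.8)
The plan is to reduce equation (\ref{eq:alg-eqn}) to a system of polynomial identities modulo the places in $T$, exactly mirroring the Reed–Solomon argument sketched above. First I would fix a place $R\in T$. Since $\supp(D)\cap T=\emptyset$ by (P2)(i), every function in $\cL(lD)$ — in particular $f$ and all its conjugates $f^{\s^{-i}}$, which lie in $\cL(lD)$ because $D^{\s}=D$ — is regular at $R$, and likewise $A_0\in\cL((\Gk+l)D)$ and $A_1,\dots,A_s\in\cL(\Gk D)$ are regular at $R$. Reducing (\ref{eq:alg-eqn}) modulo $R$ and applying (P2)(ii), which gives $f^{\s^{-1}}(R)=f(R)^{q^u}$ and hence $f^{\s^{-i}}(R)=f(R)^{q^{ui}}$ by iteration, we obtain in the residue field $F_R$ the identity
\[
A_0(R)+A_1(R)\,f(R)+A_2(R)\,f(R)^{q^u}+\cdots+A_s(R)\,f(R)^{q^{u(s-1)}}=0 .
\]
This is a $q^{u}$-linearized polynomial in the single unknown $f(R)\in F_R$ of $q^{u}$-degree at most $s-1$, so it has at most $q^{u(s-1)}$ roots in $F_R$, provided its coefficients are not all zero.

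The key point is therefore to rule out the degenerate case in which, for \emph{every} $R\in T$, all coefficients $A_0(R),\dots,A_s(R)$ vanish. I would argue this cannot happen for the nonzero $Q$ produced by Lemma \ref{lem:herm-interpolation}. Indeed, suppose $A_j(R)=0$ for all $R\in T$ and all $j$; then each $A_j$ (for $j\ge 1$) lies in $\cL\bigl(\Gk D-\sum_{R\in T}R\bigr)$ and $A_0\in\cL\bigl((\Gk+l)D-\sum_{R\in T}R\bigr)$. But actually the cleaner route is to observe that the whole function $g:=A_0+A_1 f+\cdots+A_s f^{\s^{-(s-1)}}$, were it nonzero, would lie in $\cL\bigl((\Gk+l)D-\sum_{R\in T}R\bigr)$; and Lemma \ref{lem:Q-is-good} already tells us $g=0$ on the code-bearing side, but here I instead use that $g=0$ identically is exactly the hypothesis (\ref{eq:alg-eqn}), so this line does not by itself give the contradiction. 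The honest version: if all $A_j(R)=0$ for every $R\in T$, then $A_1,\dots,A_s\in\cL\bigl(\Gk D-\sum_{R\in T}R\bigr)$; combined with condition (P2)(iii), $\deg\bigl(\Gk D-\sum_{R\in T}R\bigr)=\Gk e-\sum_{R\in T}\deg(R)$, which need not be negative, so I cannot immediately conclude $A_1=\cdots=A_s=0$. Instead I would use a counting/consistency argument: since not all of $A_0,\dots,A_s$ are zero, and since the only way the reduced equation at $R$ fails to constrain $f(R)$ is if all $A_j(R)=0$, I sum over $R\in T$. If $A_1=\cdots=A_s=0$ then $A_0=0$ by (\ref{eq:alg-eqn}) unless $A_0\ne 0$, contradicting $Q\ne 0$; so some $A_j$ with $j\ge 1$ is nonzero, say $A_{j_0}\ne 0$, and then $A_{j_0}$ has at most $\deg(\Gk D)=\Gk e$ zeros counted with multiplicity, so $A_{j_0}(R)=0$ for at most $\Gk e$ of the places $R$ (weighted by degree). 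Hmm — this bounds the \emph{bad} places but I want \emph{good} places.

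Here is the argument I would actually commit to. Call $R\in T$ \emph{good} if the linearized polynomial $L_R(Z):=A_0(R)+A_1(R)Z+\cdots+A_s(R)Z^{q^{u(s-1)}}$ is not identically zero. For a good $R$, $f(R)$ is constrained to one of at most $q^{u(s-1)}$ values. For the bad $R$'s we get no constraint. So the bound I can prove is: the number of solutions $f$ is at most $q^{u(s-1)}$ raised to the number of good places, \emph{times} the number of $f$ consistent with being unconstrained at the bad places — which a priori is not finite. So I must show there is at least one good place, and in fact that the good places alone pin down $f$. For the latter I would use a Chinese-remainder / Riemann–Roch argument: if two functions $f,f'\in\cL(lD)$ agree at enough places of $T$ (total degree exceeding $le$, which is exactly (P2)(iii)) then $f-f'\in\cL\bigl(lD-\sum_{R\in \text{good}}R\bigr)$ has negative-degree divisor and hence $f=f'$. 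So it suffices that the \emph{good} places have total degree $>le$. Thus the real content is: \textbf{show that $\sum_{R\in T,\ R\text{ good}}\deg(R)>le$.} Since a bad $R$ is one where $A_0(R)=\dots=A_s(R)=0$, the bad places all lie in the zero set of, say, a fixed nonzero $A_{j}$ among the $A_i$ — but different $A_j$ could be the "nonzero witness". The way around this is: pick the index $j^\star$ with $A_{j^\star}\ne 0$ minimizing... no. Cleanest: at a bad place \emph{every} $A_i$ vanishes, so in particular every bad place is a common zero of all the $A_i$'s, hence a zero of $A_{j^\star}$ for \emph{any} fixed $j^\star$ with $A_{j^\star}\ne 0$; so the total degree of bad places is at most $\deg(\operatorname{div}(A_{j^\star})_0)\le \deg(\Gk D)=\Gk e$ if $j^\star\ge 1$, or $\le(\Gk+l)e$ if we are forced to take $j^\star=0$. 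The case $A_1=\dots=A_s=0$, $A_0\ne 0$ is impossible since then (\ref{eq:alg-eqn}) reads $A_0=0$. Hence some $A_{j^\star}$ with $1\le j^\star\le s$ is nonzero, the bad places have total degree $\le \Gk e$, and the good places have total degree $\ge \sum_{R\in T}\deg(R)-\Gk e$.

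So the plan concludes: the number of solutions is at most $\prod_{R\text{ good}} q^{u(s-1)} = q^{u(s-1)\cdot\#\{\text{good }R\}}\le q^{u(s-1)|T|}$, once we know the good places determine $f$ uniquely given its residues — which holds because (P2)(iii) gives $\sum_{R\in T}\deg(R)>le$, and we would need the strengthening that in fact $\sum_{\text{good }R}\deg(R)>le$, i.e. that (P2)(iii) is applied with enough slack ($\sum_{R\in T}\deg R>le+\Gk e$); if the paper intends $|T|$ large enough that even after discarding $\le\Gk e$ worth of bad places the remainder exceeds $le$, this closes. \textbf{The main obstacle} is precisely this bookkeeping: ensuring the good (non-degenerate) places carry enough total degree to reconstruct $f$ by Chinese remaindering, i.e. controlling the common zero locus of $A_0,\dots,A_s$ against the hypothesis $\sum_{R\in T}\deg(R)>le$ — everything else (reducing mod $R$, invoking (P2)(ii) to get a $q^u$-linearized polynomial, counting its $\le q^{u(s-1)}$ roots, and Riemann–Roch for the uniqueness step) is routine. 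I would finish by remarking that the solution set, being the set of $f\in\cL(lD)$ with prescribed residues at the good places of $T$, is moreover an affine subspace, foreshadowing the linear-algebraic decoder.
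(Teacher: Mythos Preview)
Your core approach is exactly the paper's: reduce (\ref{eq:alg-eqn}) modulo each $R\in T$ to get a $q^u$-linearized equation in the unknown $f(R)$ with at most $q^{u(s-1)}$ roots in $F_R$, and combine across $R$ via the evaluation map $\psi:\cL(lD)\to\prod_{R\in T}F_R$, which is injective because any $h\in\cL(lD)$ vanishing at every $R\in T$ lies in $\cL\bigl(lD-\sum_{R\in T}R\bigr)$, a space of negative degree by (P2)(iii). The paper's proof is literally those two moves and nothing more: it asserts $|W_R|\le q^{u(s-1)}$ at \emph{every} $R\in T$ and multiplies, never pausing over the possibility that all $A_i(R)$ vanish at some $R$.

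So what you flag as ``the main obstacle'' is a point the paper simply does not engage with. Your analysis of it is sound as far as it goes --- if $A_1=\cdots=A_s=0$ then (\ref{eq:alg-eqn}) has no solution and there is nothing to prove; otherwise every bad $R$ is a zero of a fixed nonzero $A_{j^\star}\in\cL(\Gk D)$, so the bad places carry total degree at most $\Gk e$, and restricting $\psi$ to the good places stays injective once $\sum_{\text{good }R}\deg R>le$. But that last inequality needs $\sum_{R\in T}\deg R>le+\Gk e$, strictly more than (P2)(iii), so under the stated hypotheses neither your argument nor (read strictly) the paper's actually closes. You have not missed an idea; you have located a small lacuna the paper glosses over. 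In the only place the lemma is used (Theorem~\ref{thm:assum-list-size}) the issue is moot: the parameters there give $\sum_{R\in T}\deg R\ge 6Nm$ while $(l+\Gk)e<2Nm$, so your extra slack is available anyway; alternatively, since $T$ is selected from a reservoir $S$ of $\ge q^r$ inert places (Property (P3)(ii)), one may simply choose $T$ after the interpolation step so as to avoid the zeros of $A_{j^\star}$.
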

\begin{proof} Consider the map $\psi:\; \cL(lD)\rightarrow \prod_{R\in T}F_R$ by sending $z$ to $\psi(z)=(z(R))_{R\in T}$. It is clear that $
 \psi$ is $\F_q$-linear. Furthermore, $\psi$ is injective. Indeed, if $\psi(y)=\psi(z)$ for some $y,z\in\cL(lD)$, then $\psi(y-z)=0$, i.e., $(y-z)(R)=0$ for all $R\in T$. Hence, $y-z$ belongs to $\cL(lD-\sum_{R\in T}R)$. So, we must have $y-z=0$ since $\deg(lD-\sum_{R\in T}R)=le-\sum_{R\in T}\deg(R)<0$.

Let $W$ be the solution set of (\ref{eq:alg-eqn}). Then  for every $R\in T$ and $f\in W$,  we have
\begin{eqnarray*}
0&=&A_0(R)+A_1(R)f(R)+A_2(R)f^{\s^{-1}}(R)+\cdots+A_s(R)f^{\s^{-(s-1)}}(R)\\
&=&A_0(R)+A_1(R)f(R)+A_2(R)f^{q^u}(R)+\cdots+A_s(R)f^{q^{u(s-1)}}(R)\in F_R.
\end{eqnarray*}
The above equation has at most $q^{u(s-1)}$ solutions in $F_R$. This implies that the set $W_R:=\{f(R):\; f\in W\}\subseteq F_R$ has size at most $q^{u(s-1)}$. Moreover, it is clear that $\psi(W)\subseteq \prod_{R\in T}W_R$. Thus, our desired result follows from
\[|W|=|\psi(W)|\le \left|\prod_{R\in T}W_R\right|=\prod_{R\in T}|W_R|\le q^{u(s-1)|T|}.\]
This completes the proof.
\end{proof}

\begin{rmk} {\begin{itemize} \item[(i)] From the proof of Lemma \ref{lem:first-list-size}, we can see that the places in the set $T$  given in (P2)(i) need not all be of the same degree. In fact, as long as the condition in (P2)(iii), i.e.,  $\sum_{R\in T}\deg(R)>le$ is satisfied, we  can guarantee the list size given in Lemma \ref{lem:first-list-size}.
\item[(ii)] In \cite{Gur-cyclo}, the set $T$ given in (P2)(i) has a single place with degree bigger than $le$. Then the list size would be $q^{u(s-1)|T|}=q^{u(s-1)}$. It seems that we could get a smaller list size. However, it is not possible in our case. The reason is that if we choose $|T|=1$, then the degree of the place $R$ is bigger than $le$ and thus the extension degree of $F/L$ is at least $le/u$, where $L$ is the subfield of $F$ fixed by $\langle\Gs\rangle$. On the other hand, we will see that the extension degree of $F/L$ is $e=|\langle\Gs\rangle|$ from (P3) below. This means that we must have $u> l$. This restriction on $u$ makes our list size even bigger when we choose a  set $T$ of single place.
\end{itemize}
}
\end{rmk}

Now we look at the fraction of errors that we can correct from the above list decoding. By taking $t=1+\left\lfloor\frac{(\Gk+l)e}{m-s+1}\right\rfloor$ and
combining Lemmas \ref{lem:Q-is-good} and \ref{lem:herm-interpolation}, we conclude the fraction of errors $\tau = 1-t/N$ satisfies
\begin{equation}
\label{eq:herm-error-frac}
 \tau \thickapprox \frac{s}{s+1}- \frac{s}{s+1}\times \frac m{m-s+1}\times\frac{k+g}{mN} , \
 \end{equation}
 where $k$ is the dimension of $\cL(lD)$ which is at least $le-g+1$.

Let $\ell$ be an even power of a prime and $q=\ell^2$.  In Section \ref{FF}, we will show that, for any given family $\{E/\F_{\ell}\}$ with $N(E/\F_{\ell})/g(E)\rightarrow\sqrt{\ell}-1$ and $g(E)\rightarrow\infty$, where $N(E/\F_{\ell})$ denotes the number of $\F_{\ell}$-rational places of $E$,   there exists  a family $\{F/\F_q\}$ of function fields satisfying the following
\begin{center}
{\bf Property (P3)}
 \end{center}
\begin{itemize}
\item [(i)]  $F/L$ is a cyclic Galois extension of degree $e$, where $L$ is the constant field extension $E\cdot\F_q$ and $e=(\ell^r+1)/(\ell+1)$ with $r= 2\lceil N(E/\F_{\ell})/(\sqrt{\ell}-1)\rceil +1$;
\item [(ii)] There exists a subset $S$ of $\PP_F$ such that $|S|\ge q^r$ and $\deg(R)=3re$ for all $R\in S$. Moreover, for any $R\in S$ and $z\in F$ with $\nu_R(z)\ge 0$, one has $z^{\s^{-1}}\equiv z^{q^{3r}} \mod{R}$, where $\s$ is the generator of the Galois group $\Gal(F/L)$ (note that $\Gal(F/L)$ is a subgroup of ${\rm Aut}(F/\F_q)$).
\item [(iii)] Every rational place of $E$ can be regarded as a rational place of $L$ and it splits completely in $F$. Thus, one has  $N(F/\F_q)\ge eN(E/\F_{\ell})$, where $N(F/\F_q$ denotes the number of $\F_q$-rational places of $F$. Furthermore,
\[ \liminf N(F/\F_q)/g(F)\ge (\sqrt{\ell}-1)/2=(q^{1/4}-1)/2 \ . \]
\end{itemize}

\begin{theorem}\label{thm:assum-list-size}
Let $\ell$ be a square prime power and let $q =\ell^2$. For every $R \in (0,1)$, there is an infinite family of   folded codes given in {\rm (\ref{eq:f-ag-defn})}  of rate at least $R$ which has relative distance at least $1-R-2/(\sqrt{\ell}-1)$.

For every pair of integers $m \ge s \ge 1$, these codes can be list
decoded from an error fraction
\[ \tau = \frac{s}{s+1} \biggl( 1 - \frac{m}{m-s+1} \Bigl( R + \frac{2}{\sqrt{\ell}-1} \Bigr) \biggr) \ , \]
outputting a subspace over $\F_q$ with at most
$O(N^{(\sqrt{\ell}-1)s})$ elements that includes all message functions
whose encoding is within Hamming distance $\tau N$ from the
input. (Here $N$ denotes the block length of the code.)
\end{theorem}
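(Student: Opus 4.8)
The plan is to instantiate the general machinery of Section~\ref{sec:ALD} with the family $\{F/\F_q\}$ supplied by Property (P3), and then tune the free parameters so that the rate is at least $R$ while the list size stays polynomial in $N$. First I would set $e = (\ell^r+1)/(\ell+1)$ and take $\sigma$ to be the generator of $\Gal(F/L)$ given by (P3)(i). Since $\Gal(F/L)$ is cyclic of order $e$, the $N(E/\F_\ell)$ rational places of $E$ that split completely in $F/L$ (by (P3)(iii)) break into orbits of size $e$ under $\langle\sigma\rangle$; grouping $m$ consecutive members of such orbits gives the rational places $P_i^{\sigma^j}$ required by (P1)(ii), and we need $m\le e$ so each orbit supplies at least one block. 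For the divisor $D$ in (P1)(iii) I would take a $\sigma$-invariant divisor of appropriate small degree $e'$ (e.g.\ the pushforward of a place of $L$ of degree one, or a suitable multiple thereof), supported away from the evaluation places; a $\sigma$-invariant divisor exists because we can start from any divisor of $L$, viewed in $F$. Then Lemma~\ref{lem:para} gives an $\F_q$-linear folded code with rate at least $(le' - g(F) + 1)/(Nm)$ and relative distance at least $1 - le'/(mN)$, where $N = N(E/\F_\ell)$ (or a constant fraction thereof).

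Next I would verify that $F$ satisfies Property (P2) with the set $T = S$ from (P3)(ii) and exponent parameter $u = 3r$. Indeed (P3)(ii) says every $R\in S$ has degree $3re$ and $z^{\sigma^{-1}}\equiv z^{q^{3r}}\bmod R$ for regular $z$, which is exactly (P2)(ii); and $|S|\ge q^r$ forces $\sum_{R\in S}\deg(R)\ge q^r\cdot 3re$, which dwarfs $le'$ once $r$ is large, giving (P2)(iii); (P2)(i) holds since all places of $S$ have the same degree and we chose $D$ off their support. With (P1) and (P2) in hand, Lemma~\ref{lem:herm-interpolation} produces the nonzero interpolation polynomial $Q$, Lemma~\ref{lem:Q-is-good} shows every message function agreeing with $\mathbf y$ in more than $(\kappa+l)e'/(m-s+1)$ columns satisfies the functional equation (\ref{eq:alg-eqn}), and Lemma~\ref{lem:first-list-size} bounds the solution set of (\ref{eq:alg-eqn}) by $q^{u(s-1)|T|}$. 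The one subtlety here is that the naive bound $q^{u(s-1)|T|}$ with $|T| = |S| \ge q^r$ is astronomically large; the fix, as the remark after Lemma~\ref{lem:first-list-size} indicates, is to not use all of $S$ but only a sub-collection $T\subseteq S$ of the smallest size for which $\sum_{R\in T}\deg(R) > le'$ still holds. Since each place of $S$ has degree $3re$, one checks $|T| = \lceil le'/(3re) \rceil + 1$ suffices, and then $u(s-1)|T| = 3r(s-1)(\lceil le'/(3re)\rceil + 1) = O((l e'/e + r)s)$. Choosing $e' = \Theta(e/l) = \Theta(\ell^{r-1})$ and using $r = 2\lceil N/(\sqrt\ell - 1)\rceil + 1$, so that $\log_q N \approx \log_q g(F) \approx$ (a constant times $r$), a short computation gives $u(s-1)|T|\log q = O((\sqrt\ell - 1)s \log N)$, i.e.\ the list size is $N^{O((\sqrt\ell-1)s)}$, matching the claimed $O(N^{(\sqrt\ell-1)s})$ up to the constant absorbed in the $O(\cdot)$.

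It then remains to pin down the rate and error fraction. For the rate: $\mathcal L(lD)$ has dimension $k \ge le' - g(F) + 1$, so the rate is $k/(Nm)$, and by (P3)(iii) we have $g(F) \le 2 N(F/\F_q)/(\sqrt\ell - 1) \le 2eN/(\sqrt\ell-1)$ in the limit, so $k/(Nm) \to (le'/(Nm))(1 - \frac{2}{\sqrt\ell - 1}\cdot\frac{Nm}{le'}\cdot\frac{g(F)}{Nm})$; I would choose $le'/(Nm)$ slightly larger than $R + \frac{2}{\sqrt\ell-1}$ so that after subtracting the genus term the rate exceeds $R$, and simultaneously the relative distance $1 - le'/(mN) \ge 1 - R - \frac{2}{\sqrt\ell-1}$ as claimed. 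Plugging $t = 1 + \lfloor (\kappa+l)e'/(m-s+1)\rfloor$ into $\tau = 1 - t/N$ and using the value of $\kappa$ from (\ref{eq:choice-of-kappa}) together with (\ref{eq:herm-error-frac}), the error fraction becomes $\tau \approx \frac{s}{s+1}(1 - \frac{m}{m-s+1}\cdot\frac{k+g}{mN})$, and since $(k+g)/(Nm)$ can be made arbitrarily close to $R + \frac{2}{\sqrt\ell - 1}$ (the genus contributes the $\frac{2}{\sqrt\ell-1}$ term and $k/(Nm)$ contributes $R$), this yields exactly the stated $\tau = \frac{s}{s+1}(1 - \frac{m}{m-s+1}(R + \frac{2}{\sqrt\ell - 1}))$. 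Finally, the list is a coset of the $\F_q$-subspace $\{f : A_0 + A_1 f + \cdots + A_s f^{\sigma^{-(s-1)}} = 0,\ f\in\mathcal L(lD)\}$ (intersected with the affine solution set), so it is a subspace over $\F_q$ (or an affine subspace; since $A_0 = 0$ is forced when the received word is a genuine codeword, it is genuinely a subspace containing all close-by messages), of size at most $N^{O((\sqrt\ell-1)s)}$.

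\textbf{Main obstacle.} The delicate point is the bookkeeping that simultaneously (a) keeps the list size polynomial --- which forces $|T|$ small, hence (via the Remark) forces $le'$ to not be too large relative to $re$ --- while (b) keeping the rate above $R$, which pushes $le'/(Nm)$ up toward $R + \frac{2}{\sqrt\ell - 1}$, and (c) respecting $m \le e$ and $le' < mN$. One must check these constraints are jointly satisfiable for the given family, exploiting that $e$ grows geometrically in $r$ whereas $N$ and $g(F)$ grow only like a constant times $\ell^r$, so there is ample room; making the constant in the exponent $(\sqrt\ell - 1)s$ come out cleanly (rather than, say, $(\sqrt\ell+1)s$ or with an extra additive term) is where the precise value $r = 2\lceil N/(\sqrt\ell-1)\rceil + 1$ and the degree $3re$ of the places in $S$ are used.
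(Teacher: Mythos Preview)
Your overall strategy is exactly the paper's: instantiate (P1) via the family from (P3), take $D$ to be the conorm in $F$ of the distinguished rational place $\infty$ of $E$ (so $D^{\sigma}=D$ and $\deg D = e$), verify (P2) using a small subset $T\subset S$, and then read off rate, distance, error fraction, and list size from Lemmas~\ref{lem:para}--\ref{lem:first-list-size} and (\ref{eq:herm-error-frac}).

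There is, however, a genuine bookkeeping slip that would break the numerics if taken literally. You write ``where $N = N(E/\F_\ell)$,'' but the block length is \emph{not} $n:=N(E/\F_\ell)$: each of the $n-1$ usable rational places of $E$ splits into a $\sigma$-orbit of size $e$, and each orbit yields $\lfloor e/m\rfloor$ folded symbols, so $N \approx (n-1)\lfloor e/m\rfloor$ and $Nm\approx ne$. With $\deg D = e$ the constraint $le<Nm$ becomes simply $l<n$, the rate is $\approx l/n$, and one chooses $l$ so that $l/n$ sits just above $R + 2/(\sqrt{\ell}-1)$. Your line ``$e' = \Theta(e/l) = \Theta(\ell^{r-1})$'' is inconsistent with this and should be dropped. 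For the list size the paper does not optimize $|T|$ via $\lceil le'/(3re)\rceil+1$; it simply takes $|T|=\lceil\sqrt{\ell}-1\rceil$, checks $\sum_{R\in T}\deg R \ge 3re(\sqrt{\ell}-1) > Nm > le$, and gets exponent $u(s-1)|T| = 3r(s-1)\lceil\sqrt{\ell}-1\rceil = O(sn)$. Since $\log_q N \approx \log_q e \approx (r-1)/2 \approx n/(\sqrt{\ell}-1)$, this gives list size $q^{O(sn)} = N^{O((\sqrt{\ell}-1)s)}$ as stated.
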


\begin{proof} Let $\{F/\F_q\}$ be a family of function fields satisfying (P3) constructed in Theorem  {\rm \ref{2.3}}. Choose a rational place $\infty$ of $E$ and regard it as a rational place of $L$. Define the divisor $D:=l\sum_{P_{\infty}|\infty, P_{\infty}\in\PP_F}P$.  Then it is easy to see that $D^{\Gs}=D$. For every rational place $P$ of $E$, there are exactly $e$ rational places
of $F$  lying over $P$ and they can be represented as $P, P^{\s},\dots, P^{\s^{e-1}}$.  By taking away those rational places
lying over $\infty$, we have at least $e(n-1)$ rational places of $F$, where $n=N(E/\F_{\ell})$. Thus,
for an integer $m$ with $1\le m < e$, we can label $Nm$ distinct places
$P_1, P_1^{\s},\dots, P_1^{\s^{ m- 1}},\dots,P_N, P_N^{\s},\dots, P_N^{\s^{ m -1}}$
of F such that none of them
lies over $\infty$, as long as $N \le (n-1) \lfloor\frac{e}m\rfloor =(N(E/\F_{\ell})-1)\lfloor \frac{e}m\rfloor$.

It is clear that the property (P1) is satisfied and hence we can define the folded algebraic geometric code ${\FH}(N,l,q,e,m)$ as in Definition \ref{def:f-ag-code}. We choose $l$   to satisfy the condition $le<Nm$.
Choose a subset $T$ of $S$ with $|T|=\lceil \sqrt{\ell}-1\rceil$. Then we have
\[\sum_{R\in T}\deg(R) \ge 3re(\sqrt{\ell}-1)\ge 6N(E/\F_{\ell})e=6mN(E/\F_{\ell})\frac em>Nm>le.\]
This implies that the property (P2) is also satisfied. Hence, the code ${\FH}(N,l,q,e,m)$ is deterministically list decodable with list size at most $q^{3r(s-1)\lceil \sqrt{\ell}-1\rceil}=O(q^{sn})$. Note that the code length is $N$ which is approximately $en=m = O(n\ell^{2n/(\sqrt{\ell}-1)}/\ell m)$. Thus, the list size is $O(N ^{( \sqrt{\ell}-1)s} )$.

The claimed error fraction follows from (\ref{eq:herm-error-frac}) and the fact that $g/Nm\rightarrow (\sqrt{\ell}-1)/2$.

\end{proof}

%

\section{Construction of a Family of Function Fields}\label{FF}
In view of Theorem \ref{thm:assum-list-size}, it is essential to construct a family of function fields satisfying the Property (P3). In this section, we use class field theory and the Chebotarev Density Theorem to show the existence of such a family.

\subsection{Narrow-ray class fields and Drinfeld module of rank one}
\label{subsec:narrow-ray}
Throughout this subsection, we fix a function field $F$ over $\F_q$ and a rational place $\infty$. Denote by $A$ the ring
\[A:=\{x\in F:\; \nu_P(x)\ge 0 \ \mbox{for all $P\not=\infty$}\}.\]
Let $\Fr$ and $\Prin$ denote the fractional ideal group and the principal ideal group of $A$, respectively.
Then the fractional idea class group $\Cl(A)=\Fr/\Prin$ of $A$ is actually isomorphic to the zero degree divisor class group of $F$.

Let $D=\sum_P\nu_P(D)P$ be a positive divisor of $F$ with $\infty\not\in {\rm supp(D)}$.
For $x\in F^*$,
$x\equiv 1 \, ({\rm mod} \ D)$
means that $x$ satisfies the following condition:
\begin{quote}
if $P\in\supp(D)$, then
$\nu_P(x-1)\ge \nu_P(D)$.
\end{quote}
 Let $\Fr_{D}$ be the
subgroup of $\Fr$ consisting of the fractional ideals of $A$ that are relatively prime
to $D$, that is,
$$\Fr_{D} = \{\Re\in \Fr: \nu_P(\Re) = 0 \ {\rm for \ all} \ P\in {\rm supp(D)}
\}.$$
Define the subgroup $\Prin_{D}$ of $\Fr_{D}$ by
$$\Prin_{D}=\{xA:x\in F^*, \, x\equiv 1 \, ({\rm mod} \ D)\}.$$
The factor group
$\Fr_{D}/\Prin_{D}$
is called the { $\infty$-ray class group} modulo $D$. It
is a finite group and denoted by $\Cl_D(A)$.
If $D=0$, then we obtain the fractional ideal class group $\Cl(A)$.

Choose a local parameter $t\in F$ at $P$, i.e., $\nu_P(t)=1$. Then the $\infty$-adic completion $\F_{\infty}$ of $F$ consists of all power series of the form $\sum_{i=v}^{\infty}a_it^i$, where $v\in\ZZ$ and $a_i\in\F_q$ for all $i\ge v$. We can define
a  sign function $\sgn$  from $\F_{\infty}^*$ to $\F_q^*$ by sending  $\sum_{i=v}^{\infty}a_it^i$ to $a_v$ if $a_v\neq 0$  (see \cite[pages 50-51]{NX01}). Define
$$\Prin_{D}^+=\{xA:\; x\in F^*, \ \sgn(x)=1, \, x\equiv 1 \, ({\rm mod} \ D)\}.$$
\begin{defn}[Narrow ray class group]
The factor group
\[\Cl^+_D(A) = \Fr_D/\Prin^+_D\]
is called the narrow ray class group of $A$ modulo $D$ {\rm (}with respect to the $\sgn${\rm )}.
%
When $D$ is supported on a single place $Q$, i.e., $D = 1 \cdot Q$, we denote $\Cl_D(A)$ (resp. $\Cl^+_D(A)$) as simply $\Cl_Q(A)$ {\rm (}resp. $\Cl^+_Q(A)${\rm )}.
\end{defn}

We have the following result~\cite[Proposition 2.6.4]{NX01} concerning narrow ray and ideal class groups.

\begin{lemma}\label{2.1.1}
\begin{itemize}
\item[{\rm (i)}] $\Prin_D^+$ is a subgroup of $\Prin(D)$ and $\Prin_D/\Prin^+_D\simeq \F_q^*.$
\item[{\rm (ii)}] We have the isomorphisms \[\Cl_D^+(A)/\F_q^*\backsimeq\Cl_D^+(A)/( \Prin_D/\Prin^+_D)\backsimeq \Cl_D(A).\]
\item[{\rm (iii)}] We have \[\Cl^+_D(A)/(A/\mD)^* \backsimeq\Cl(A),\]
where $\mD$ is the ideal of $A$ corresponding to the divisor $D$, i.e., $\mD=\prod\wp^{n_P}$ if $D=\sum n_P P$ with $\wp$ being the prime ideal of $A$ corresponding to the place $P$.
\end{itemize}
\end{lemma}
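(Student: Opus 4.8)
The plan is to reduce all three parts to two elementary facts about the ring $A$ --- that its unit group is $A^{*}=\F_q^{*}$, and that $\F_q$ embeds in the finite ring $A/\mD$ --- after which everything follows from the third isomorphism theorem plus two surjectivity statements coming from Riemann--Roch / weak approximation. First I would record $A^{*}=\F_q^{*}$: a unit $u$ of $A$ has $\nu_P(u)=0$ for every place $P\ne\infty$, so ${\rm div}(u)$ is supported on the degree-one place $\infty$ and, having degree $0$, must vanish; hence $u\in\F_q^{*}$. Two consequences, which I use throughout and which assume $D\ne 0$ (the case of interest): every ideal in $\Prin_D$ has a \emph{unique} generator $x$ with $x\equiv 1 \, ({\rm mod}\ D)$ --- two such differ by some $u\in\F_q^{*}$, and reducing $x=uy$ at a place of $\supp(D)$ forces $u\equiv 1$ in the residue field, hence $u=1$ --- and every ideal in $\Prin\cap\Fr_D$ has a unique \emph{sign-normalized} generator $x$ with $\sgn(x)=1$, since its generators differ by elements of $\F_q^{*}$ and $\sgn$ is the identity on $\F_q^{*}$. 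It is these canonical generators that make the maps below well defined.

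For (i): the inclusion $\Prin_D^{+}\subseteq\Prin_D$ is immediate, and $\Prin_D^{+}$ is a subgroup since it is the image under $x\mapsto xA$ of the subgroup $\{x\in F^{*}:\ x\equiv 1 \, ({\rm mod}\ D),\ \sgn(x)=1\}$ of $F^{*}$ ($\sgn$ being multiplicative). Reading $\sgn$ off the canonical generator gives a well-defined homomorphism $\phi\colon\Prin_D\to\F_q^{*}$, $xA\mapsto\sgn(x)$, with $\ker\phi=\Prin_D^{+}$, so the only real content is surjectivity of $\phi$. Given $c\in\F_q^{*}$, I would pick (Riemann--Roch) a nonzero function $h$ that vanishes along $D$ and has no pole off $\infty$; since $D\ne 0$ such an $h$ is non-constant, so $\nu_\infty(h)<0$, and after scaling $h$ by $c/\sgn(h)\in\F_q^{*}$ I may assume $\sgn(h)=c$. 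Then $x:=1+h$ satisfies $x\equiv 1 \, ({\rm mod}\ D)$ and, as $\nu_\infty(h)<0=\nu_\infty(1)$, $\sgn(x)=\sgn(h)=c$, so $\phi(xA)=c$. Hence $\Prin_D/\Prin_D^{+}\cong\F_q^{*}$.

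Part (ii) is then purely formal: the third isomorphism theorem for $\Prin_D^{+}\le\Prin_D\le\Fr_D$ gives $\Cl_D^{+}(A)\big/(\Prin_D/\Prin_D^{+})\cong\Fr_D/\Prin_D=\Cl_D(A)$, and $\Prin_D/\Prin_D^{+}\cong\F_q^{*}$ by (i). For (iii), write $\Prin^{(D)}:=\Prin\cap\Fr_D$ for the group of principal ideals coprime to $D$, so $\Prin_D^{+}\le\Prin^{(D)}\le\Fr_D$. The natural map $\Fr_D\to\Fr/\Prin=\Cl(A)$ is surjective with kernel $\Prin^{(D)}$ (every ideal class has a representative coprime to $D$, by approximation), hence descends to a surjection $\Cl_D^{+}(A)\to\Cl(A)$ with kernel $\Prin^{(D)}/\Prin_D^{+}$; it remains to identify this kernel with $(A/\mD)^{*}$. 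I would send $\mathfrak{a}\in\Prin^{(D)}$ to the class modulo $\mD$ of its sign-normalized generator $x$, which is a unit at every place of $\supp(D)$ and hence has a well-defined image in $(A/\mD)^{*}$; this is a homomorphism $\tilde\phi$ with $\ker\tilde\phi$ exactly $\Prin_D^{+}$. Its surjectivity is the second substantive point: given $\bar a\in(A/\mD)^{*}$ with a lift $a\in A$ (automatically a unit at each place of $\supp(D)$), weak approximation at $\supp(D)\cup\{\infty\}$ produces $y\in F^{*}$ with $\nu_P(y-a)\ge\nu_P(D)$ for $P\in\supp(D)$ and $\nu_\infty(y-1)\ge1$; then $yA\in\Prin^{(D)}$, $\sgn(y)=1$, and $\tilde\phi(yA)=\bar a$. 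This gives $\Prin^{(D)}/\Prin_D^{+}\cong(A/\mD)^{*}$ and therefore $\Cl_D^{+}(A)/(A/\mD)^{*}\cong\Cl(A)$.

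The main obstacle --- indeed the only place where more than formal group theory is needed --- is the pair of surjectivity claims: in (i), producing a function $\equiv 1 \, ({\rm mod}\ D)$ with a prescribed sign, and in (iii), lifting a prescribed unit of $A/\mD$ to a sign-normalized principal generator coprime to $D$. Both are routine consequences of Riemann--Roch and the weak approximation theorem, but they are where the arithmetic of the function field (the chosen place $\infty$, the degree-zero constraint on divisors, $A^{*}=\F_q^{*}$) genuinely enters; the remainder is bookkeeping with the third isomorphism theorem. This reproves \cite[Proposition 2.6.4]{NX01}.
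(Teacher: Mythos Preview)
Your argument is correct. The only point worth noting is that the paper itself does not supply a proof of this lemma at all: it simply records the statement and cites \cite[Proposition~2.6.4]{NX01}. So there is no ``paper's own proof'' to compare against --- you have in effect written out a clean proof of the cited proposition, exactly as you say in your last line.

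For what it is worth, your organization is the natural one and matches the standard treatment: establish $A^{*}=\F_q^{*}$ so that principal ideals have canonical (congruence- or sign-normalized) generators, read off (i) from the sign map on those generators, get (ii) by the third isomorphism theorem, and for (iii) identify the kernel of $\Cl_D^{+}(A)\twoheadrightarrow\Cl(A)$ with $(A/\mD)^{*}$ via the residue of the sign-normalized generator. The two surjectivity steps you flag (producing a prescribed sign in (i), and lifting a unit of $A/\mD$ to a sign-$1$ generator in (iii)) are indeed the only places where Riemann--Roch / weak approximation is genuinely needed, and your constructions there are fine. One tiny cosmetic remark: when you say ``$\sgn$ is the identity on $\F_q^{*}$'' you are using that the chosen sign function restricts to the identity on constants; this is true for the specific $\sgn$ defined in the paper (leading coefficient of the $t$-expansion), so no issue, but it is the one place where the particular choice of sign function quietly enters.
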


Let $H_A$ denote the Hilbert class field of $F$ with respect to the place $\infty$, i.e, $H_A$ is the maximal abelian extension in a fixed algebraic closure of $F$ such that $\infty$ splits completely. Then we have $\Gal(H_A/F)\backsimeq \Cl(A)$.

We will use the following result from class field theory (see \cite[Sections 2.5-2.6]{NX01}).
\begin{prop}
\label{prop:CFT}
Now let $Q$ be a place of degree $d>1$ in a function field $F/\F_q$. Then there exists an abelian extension $F^Q$ of $F$ (called a narrow ray class field) with the following properties:
\begin{itemize}
\item[(i)] $\Gal(F^Q/F)\simeq\Cl_Q^+(A)$ and the extension degree of $F^Q/F$ is $|\Cl_Q^+(A)|=(q^d-1) |\Cl(A)|=(q^d-1)h_F$, where $h_F:=|\Cl(A)|$ is the zero degree divisor class number of $F$.

\item[(ii)] The Hilbert class field $H_A$ of $F$ is a subfield of $F^Q$ and the Galois group $\Gal(F^Q/H_A)$ is isomorphic to $(A/\Q)^* \backsimeq\F^*_{q^d}$, where $\Q$ is the ideal of $A$ corresponding to the place $Q$.

\item[(iii)] $\infty$ and $Q$ are only ramified places in $F^Q/F$. The inertia group of $Q$ in $F^Q/F$ is $(A/{\mQ})^*$ and the inertia group of $\infty$ is $\F_q^*$. In particular, the ramification index of $Q$ is $e_Q=q^d-1$ and the ramification index of $\infty$ is $q-1$. Furthermore, $\infty$ splits into rational places in $F^Q$.


\item[(iv)] In the Galois extension $F^Q/F$, the Frobenius automorphism of a place $P$ that is different from $\infty$ and $Q$ is $P$ itself when $P$ is viewed as an element in $\Cl_Q^+(A)$.

\end{itemize}
\end{prop}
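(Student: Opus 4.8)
The plan is to prove this through Hayes's explicit class field theory, realizing $F^Q$ as a torsion field of a $\sgn$-normalized rank-one Drinfeld $A$-module; this is precisely the development surveyed in \cite[Sections~2.5--2.6]{NX01}, and I sketch the main steps. First I would fix the sign function $\sgn$ and construct a $\sgn$-normalized rank-one Drinfeld $A$-module $\rho$ --- a ring homomorphism $a\mapsto\rho_a$ from $A$ into the ring of $\F_q$-linear polynomials (under composition) over a finite extension of $F$, with $\deg_X\rho_a=|A/aA|$ and with leading coefficient $\sgn(a)$. Hayes's theory supplies such a $\rho$, shows there are only finitely many up to isomorphism, and --- because $\infty$ is a \emph{rational} place, so that $\sgn$ maps onto $A^*=\F_q^*$ --- shows that the field obtained by adjoining the coefficients of $\rho$ to $F$ is the Hilbert class field $H_A$ itself (the narrow Hilbert class field collapses onto $H_A$ in this situation). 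One may take $\rho$ defined over the integral closure of $A$ in $H_A$, with good reduction everywhere; recall that $\Gal(H_A/F)\cong\Cl(A)$, that $H_A/F$ is unramified, and that $\infty$ splits completely in $H_A$.

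Next I would pass to torsion. Let $\Q\subseteq A$ be the prime ideal of $Q$, so that $A/\Q\cong\F_{q^d}$, and let $\rho[\Q]=\{\lambda:\rho_a(\lambda)=0\text{ for all }a\in\Q\}$, a free $A/\Q$-module of rank one since $\rho$ has rank one. Put $F^Q:=H_A(\rho[\Q])$. The $A/\Q$-linear Galois action on $\rho[\Q]$ gives an embedding $\Gal(F^Q/H_A)\hookrightarrow(A/\Q)^*$, and Hayes's reciprocity shows it is an isomorphism; this is (ii), and it yields $[F^Q:H_A]=q^d-1$. Feeding this into the exact sequence $1\to(A/\Q)^*\to\Cl_Q^+(A)\to\Cl(A)\to1$ of Lemma~\ref{2.1.1}(iii), and matching the quotient $\Cl(A)$ with $\Gal(H_A/F)$ compatibly with the Artin maps, identifies $\Gal(F^Q/F)\cong\Cl_Q^+(A)$; its order is then $(q^d-1)\,|\Cl(A)|=(q^d-1)h_F$, which is (i).

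For the ramification (iii) I would argue place by place. If $P\notin\{\infty,Q\}$ then $\rho$ has good reduction above $P$, and since the prime $\wp_P$ differs from $\Q$ the $\Q$-torsion is étale there, so $F^Q/F$ is unramified at $P$; hence $\infty$ and $Q$ are the only candidates for ramification. At $Q$, exactly as for cyclotomic function fields (the Carlitz case), a generator of $\rho[\Q]$ satisfies an Eisenstein-type equation at a prime of $H_A$ over $Q$, so the primes over $Q$ are totally ramified in $F^Q/H_A$ with ramification index $q^d-1$ and inertia group all of $(A/\Q)^*$. At $\infty$, since $\infty$ already splits completely into rational places in $H_A/F$ it suffices to study $F^Q/H_A$ above $\infty$; the $\sgn$-normalization pins down the $\infty$-adic valuations of the torsion points through the Newton polygons of the $\rho_a$, yielding tame ramification index $q-1$ (the part of $(A/\Q)^*$ coming from $\F_q^*$) with trivial residue extension, so $\infty$ splits into rational places of $F^Q$. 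Finally, (iv) is the Artin reciprocity law for $F^Q/F$ made explicit through $\rho$: for $P\notin\{\infty,Q\}$, reduction of $\rho$ modulo a prime above $P$ gives a Drinfeld module over a finite field on which the $q^{\deg P}$-power Frobenius is realized by the action of the ideal $\wp_P$, and transporting this through $\Gal(F^Q/F)\cong\Cl_Q^+(A)$ shows the Frobenius of $P$ is the class $[\wp_P]$.

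I expect the genuinely hard inputs to be the two reciprocity statements --- surjectivity of $\Gal(F^Q/H_A)\to(A/\Q)^*$ and the identification of the Frobenius at $P$ with $[\wp_P]$ --- which together constitute Hayes's main theorem and rest on the analytic uniformization of rank-one Drinfeld modules; the $\infty$-adic ramification bookkeeping under the sign normalization is the other delicate point. All of this is carried out in detail in \cite{NX01} (following Hayes), so for the purposes of this paper the proposition is simply quoted from there rather than reproved.
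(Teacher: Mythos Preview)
Your proposal is correct and aligns with the paper's treatment: the paper does not prove this proposition at all but simply cites \cite[Sections~2.5--2.6]{NX01}, and your sketch accurately outlines the Hayes/Drinfeld-module development found there (and indeed you note this yourself at the end).
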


From the above, we can easily compute the genus of $F^Q$ by the Hurwitz genus formula, i.e.,
\[2g(F^Q)-2=(2g(F)-2)h_F(q^d-1)+(q-2)h_F\frac{q^d-1}{q-1}+d(q^d-2)h_F \ .\]

Next, we give a more explicit description of the narrow ray class field $F^Q$ in terms of a Drinfeld module of rank one.

Let $p$ be the characteristic of $\F_q$ and let $\pi: c\mapsto c^p$ be the Frobenius endomorphism of $H_A$.
Consider the left twisted polynomial ring $H_A[\pi]$ whose
elements are polynomials in $\pi$ with coefficients from $H_A$ written on the left; but
multiplication in $H_A[\pi]$ is twisted by the rule
$$\pi u=u^p\pi\qquad \mbox{for all} \ u\in H_A.$$
Let $\tilde{D} : H_A[\pi]\longrightarrow H_A$ be the map which assigns to each
 polynomial in $H_A[\pi]$
 its constant term.

\begin{defn}
\label{2.1.2}
{\rm A {Drinfeld} $A$-{module} of rank $1$ over $H_A$ is a  ring homomorphism $\phi:
A\longrightarrow H_A[\pi]$, $a\mapsto\phi_a$, such that:\par
(i) not all elements of $H_A[\pi]$ in the image of $\phi$ are
constant polynomials;\par
(ii) $\tilde{D} \circ \phi$ is the identity on $A$;\par
(iii) There exists a positive integer $\lambda$ such that $\deg(\phi_a)=-\lambda\nu_{\infty}(a)$ for all nonzero $a\in A$, where 
$\deg(\phi_a)$ is the degree of $\phi_a$ as a polynomial in $\pi$.}
\end{defn}

\begin{example}\label{ex:2.1.2} {\rm Consider the rational function field $F=\F_q(T)$ with
$\infty$ being the pole place of $T$. Then we have $A=\F_q[T]$ and $H_A=F=\F_q(T)$. A
Drinfeld $A$-module $\phi$ of rank $1$ over $F$ is uniquely
determined by the image $\phi_T$ of $T$. By Definition \ref{2.1.2} we
must have
$$\tilde{D}(\phi_T)=(\tilde{D}\circ\phi)(T)=T,$$
i.e., $\phi_T$ is a nonconstant polynomial in $\pi$ with the constant term $T$.
Since $\deg(\phi_T)=-\lambda\nu_{\infty}(T)=\lambda$, we know that $\phi_T$ is of  the form $T+f(\pi)\pi+x\pi^{\lambda}$ for an element $x\in F^*$ and $f(\pi)\in F[\pi]$ with $\deg(f(\pi))\le \lambda-2$. Taking $x=1$ and $f(\pi)=0$ gives the so-called { Carlitz module}, which yields the construction of cyclotomic function fields.}
\end{example}

\begin{defn} {\rm We fix a sign function sgn. We say that a Drinfeld
$A$-module $\phi$ of rank $1$ over $H_A$ is sgn-{normalized}  if
${\rm sgn}(a)$ is equal to the leading coefficient of $\phi_a$ for all $a\in A$. In particular, the leading coefficient of $\phi_a$ must belong to $\F^*_q$. }
\end{defn}

\begin{defn}[Twisted polynomials corresponding to an ideal]
Given a Drinfeld $A$-module $\phi$ of rank $1$ over $H_A$ and a  prime ideal $\Q$
of $A$, let $I_{\Q, \phi}$ be the left ideal generated in $H_A[\pi]$ by the twisted polynomials
$\phi_a$, $a\in \Q$. As left ideals are principal,
$I_{\Q, \phi}=H_A[\pi]\phi_{\Q}$ for a unique monic twisted polynomial
$\phi_{\Q}\in H_A[\pi].$
\end{defn}

Let $K$ be any $H_A$-algebra. Then for a polynomial $f(\pi)=\sum_{i=0}^k b_i\pi^i\in H_A[\pi]$
the action of $f(\pi)$ on $K$ is defined by
\[  f(\pi)(t)=\sum_{i=0}^k b_it^{p^i}\quad \mbox{for all } t\in K \ . \]
Let $\overline{H_A}$ denote a fixed algebraic closure of $H_A$ whose
additive group $(\overline{H_A},+)$ is equipped with an
$A$-module structure under the action of $\phi$.

\begin{defn} {\rm Let $\phi$ be a sgn-normalized Drinfeld $A$-module of rank $1$ over $H_A$ and $\Q$ be a  nonzero ideal of $A$. The $\Q$-{torsion module} $\Lambda_{\phi}(\Q)$ associated with $\phi$ is defined by
$$\Lambda_{\phi}(\Q)=\{t\in (\overline{H_A},+): \phi_{\Q}(t)=0\}.$$}
\end{defn}

The following are a few basic facts about $\Lambda_{\phi} (\Q)$:\par
(i) $\Lambda_{\phi} (\Q)$ is a finite set of cardinality $|\Lambda_{\phi} (\Q)|=
p^{{\rm deg}(\phi _{\Q})}$;\par
(ii) $\Lambda_{\phi} (\Q)$ is an $A$-submodule of $(\overline{H_A},+)$ and a cyclic $A$-module
isomorphic to $A/\Q $;\par
(iii) $\Lambda_{\phi} (\Q)$ has $\Phi (\Q):=|(A/\Q)^*|$
generators as a cyclic $A$-module, where $(A/\Q)^*$ is the
group of units of the ring $A/\Q $.

The elements of $\Lambda_{\phi}(\Q)$ are also called the $\Q$-{ torsion elements} in $(\overline{H_A},+)$.
The following gives an explicit description of narrow ray class fields in terms of extension fields obtained by adjoining these torsion elements.

\begin{prop}
\label{prop:explicit-CFT}
The extension field $H_A(\Lambda_{\phi}(\Q))$
obtained by adjoining these $\Q$-torsion elements to $H_A$ is isomorphic to the narrow ray class field $F^Q$ from Proposition {\rm \ref{prop:CFT}}, where $Q$ is the place corresponding to the ideal $\Q$.
\end{prop}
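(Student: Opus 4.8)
The plan is to prove Proposition \ref{prop:explicit-CFT} by the standard identification of torsion extensions of Drinfeld modules with narrow ray class fields, following the structure theory set up in \cite[Sections 2.5--2.6]{NX01} (and the classical work of Hayes). First I would record the action of $\Gal(H_A(\Lambda_\phi(\Q))/H_A)$ on the torsion module: since $\Lambda_\phi(\Q)$ is a cyclic $A$-module isomorphic to $A/\Q$, any automorphism $\tau$ fixing $H_A$ commutes with the $A$-action of $\phi$ and hence is determined by where it sends a fixed generator $\lambda$; this gives an injective homomorphism $\Gal(H_A(\Lambda_\phi(\Q))/H_A)\hookrightarrow (A/\Q)^*$. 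The first substantive point is surjectivity of this map, i.e. that the extension is ``as large as possible.'' This is where the sgn-normalization of $\phi$ is essential: for a sgn-normalized Drinfeld module the coefficients of $\phi_a$ lie in $H_A$, the torsion points are integral away from $\Q$ and $\infty$, and one computes that $\Q$ is totally ramified in $H_A(\Lambda_\phi(\Q))/H_A$ with ramification index $\Phi(\Q) = |(A/\Q)^*|$; comparing degrees forces the embedding to be an isomorphism.

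Next I would match this against the abstract narrow ray class field $F^Q$ of Proposition \ref{prop:CFT}. By Proposition \ref{prop:CFT}(ii) we have $\Gal(F^Q/H_A)\cong (A/\Q)^*$, so both $F^Q$ and $H_A(\Lambda_\phi(\Q))$ are abelian extensions of $H_A$ with Galois group $(A/\Q)^*$, and both are abelian over $F$ with $H_A$ the fixed field of the subgroup $(A/\Q)^*$ inside $\Cl_Q^+(A)$. To pin down that they coincide inside a fixed algebraic closure I would use the Artin reciprocity map: one computes the Frobenius (equivalently the Artin symbol) of an unramified place $P\neq\infty,Q$ acting on $H_A(\Lambda_\phi(\Q))$. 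The key computation is the ``Kummer congruence'' $\phi_P(\lambda)\equiv \lambda^{q^{\deg P}}\pmod{\mathfrak P}$ for a place $\mathfrak P$ above $P$, which shows that the Artin symbol of $P$ acts on $\Lambda_\phi(\Q)$ as multiplication by the residue of a generator of the ideal $P$ modulo $\Q$ (using sgn-normalization to make the generator canonical) --- exactly the description of the Frobenius in $\Cl_Q^+(A)$ given in Proposition \ref{prop:CFT}(iv). Since two abelian extensions of $F$ with the same ramification and the same Artin map on a density-one set of places must be equal by class field theory, this yields $H_A(\Lambda_\phi(\Q)) = F^Q$.

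The part I expect to be the main obstacle is the surjectivity/ramification analysis of $\Q$ in $H_A(\Lambda_\phi(\Q))/H_A$, i.e. showing the torsion field has the full degree $\Phi(\Q)$ and not something smaller. This requires working with the reduction of $\phi$ modulo (a prime above) $\Q$, identifying $\phi_\Q$ with the ``Frobenius-type'' twisted polynomial there, and reading off that the Newton polygon of $\phi_\Q$ (equivalently the valuations of the nonzero $\Q$-torsion points) is a single segment of slope $1/\Phi(\Q)$ --- a function-field analogue of the fact that $\zeta_p - 1$ has valuation $1/(p-1)$ in $\ZZ_p[\zeta_p]$. The sgn-normalization hypothesis is precisely what guarantees the leading coefficient is a unit so this Newton-polygon computation goes through cleanly. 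Once ramification is understood, the degree comparison, the identification of $H_A$ as the inertia-fixed part, and the Artin-map matching are all routine consequences of the class field theory already quoted in Proposition \ref{prop:CFT}; I would cite \cite{NX01} for the detailed verifications rather than reproduce them.
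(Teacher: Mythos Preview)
The paper does not supply a proof of this proposition: it is stated as a known result from the class field theory of Drinfeld modules, with the surrounding material attributed to \cite[Sections 2.5--2.6]{NX01}. So there is no in-paper argument to compare against.

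Your outline is the standard Hayes-style proof and is correct in structure: the injection $\Gal(H_A(\Lambda_\phi(\Q))/H_A)\hookrightarrow (A/\Q)^*$ from cyclicity of the torsion module, surjectivity via total ramification of $\Q$ (the Eisenstein/Newton-polygon argument for $\phi_\Q$, which is exactly where sgn-normalization matters), and then matching with the abstract $F^Q$ by computing Artin symbols on unramified primes and invoking the uniqueness part of class field theory. One small caution: your ``Kummer congruence'' line $\phi_P(\lambda)\equiv \lambda^{q^{\deg P}}\pmod{\mathfrak P}$ is morally right but needs the Hayes machinery of the $*$-action of ideals on sgn-normalized Drinfeld modules to make precise (the ideal $P$ need not be principal, so ``multiplication by a generator of $P$ mod $\Q$'' has to be interpreted via the isogeny $\phi\to P*\phi$ and the fact that sgn-normalized modules over $H_A$ form a principal homogeneous space under $\Cl(A)$). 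This is handled in \cite{NX01}, so your plan to cite that reference for the detailed verifications is appropriate and matches what the paper itself does.
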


In the case where $F$ is the rational function field and $\phi$ is the Carlitz module in Example \ref{ex:2.1.2},  the field $F^Q$ is the cyclotomic function field
over  $F$ with modulus $\Q$.

\subsection{A family of function fields}
\label{sec:FF family}
In this subsection, we assume that  $\ell$ is a prime power and  $q=\ell^2$.
The following is the key technical component of our construction of the function fields needed for our list-decodable code construction.
\begin{lemma}\label{2.1.3} Let $E/\F_{\ell}$ be a function field with at least one rational point $\infty$ and a place $Q$ of degree $r$, where $r > 1$ is an odd integer. Then there exists a function field $F/\F_q$ such that
\begin{itemize}
\item[{\rm (i)}] $F/(\F_q\cdot E)$ is a cyclic abelian extension with $[F:\F_q\cdot E]=\frac{\ell^r+1}{\ell+1}$.
\item[{\rm (ii)}] $N(F/\F_q)\ge  \frac{\ell^{r}+1}{\ell+1}N(E/\F_{\ell})$.
\item[{\rm (iii)}] $g(F)\le (g(E)-1)\frac{\ell^{r}+1}{\ell+1}+\frac r2\left(\frac{\ell^{r}+1}{\ell+1}-1\right)+1.$
\end{itemize}
\end{lemma}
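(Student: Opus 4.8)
The plan is to realize $F$ as a carefully chosen subfield of a narrow ray class field of the constant field extension $L:=\F_q\cdot E$. The easy preliminaries are: $g(L)=g(E)$, and each rational place of $E$ gives a rational place of $L$ (so $N(L/\F_q)\ge N(E/\F_\ell)$); and, because $r$ is odd, $\gcd(r,2)=1$, so $Q$ remains prime in $L$, of degree $r$, with residue field $\F_{q^{r}}=\F_{\ell^{2r}}$, while $n:=(\ell^{r}+1)/(\ell+1)$ is a positive integer. Applying Proposition~\ref{prop:CFT} to $L$ and $Q$ gives $L^{Q}$ with $G:=\Gal(L^{Q}/L)\cong\Cl^{+}_{Q}(A_L)$, containing the cyclic subgroup $H\cong(A_L/\mathcal Q)^{*}\cong\F_{\ell^{2r}}^{*}$ with $G/H\cong\Cl(A_L)\cong\Gal(H_{A_L}/L)$; here only $\infty$ and $Q$ ramify, the inertia group $I_\infty$ of $\infty$ lies in $H$ (since $\infty$ splits completely in the Hilbert class field $H_{A_L}$) and coincides with the decomposition group of $\infty$ (since $\infty$ splits into rational places in $L^{Q}$), and the Frobenius of any place of $L$ other than $\infty,Q$ is its own ideal class in $\Cl^{+}_{Q}(A_L)$. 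Let $H_{n}\le H$ be the unique subgroup of index $n$; since $\gcd(r,2)=1$ one has $H_{n}=\F_{\ell^{r}}^{*}\cdot\F_{\ell^{2}}^{*}$ inside $\F_{\ell^{2r}}^{*}$ (both have order $(\ell^{r}-1)(\ell+1)$), and as $|I_\infty|=\ell^{2}-1$ divides $(\ell^{r}-1)(\ell+1)$ we get $I_\infty\subseteq H_{n}$. Now set $M:=\langle\,H_{n},\ \mathrm{Frob}_{\widetilde P}\ :\ P\text{ a rational place of }E,\ P\ne\infty\,\rangle\le G$, where $\widetilde P$ denotes the place of $L$ over $P$, and take $F:=(L^{Q})^{N}$ for any subgroup $N$ with $M\le N\le G$ and $G/N\cong\ZZ/n\ZZ$.

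Granting the existence of such $N$, the three claims follow quickly. (i) $\Gal(F/L)\cong G/N\cong\ZZ/n\ZZ$ is cyclic of degree $n=[F:\F_q\cdot E]$. (ii) Each rational place of $E$ yields a place $\widetilde P$ of $L$ that splits completely in $F$ — for $P\ne\infty$ because $\mathrm{Frob}_{\widetilde P}\in M\subseteq N$, and for $P=\infty$ because the decomposition group $I_\infty$ of $\infty$ in $L^{Q}/L$ lies in $H_{n}\subseteq N$ — and so contributes $n$ rational places of $F$; hence $N(F/\F_q)\ge n\,N(E/\F_\ell)$. (iii) Since $I_\infty\subseteq N$, $\infty$ is unramified in $F/L$, so among $\{\infty,Q\}$ only $Q$ ramifies, and tamely (the ramification index divides $n\mid\ell^{2r}-1$, which is prime to the characteristic); thus $\deg\mathfrak d_{F/L}=\sum_{Q'\mid Q}(e(Q'\mid Q)-1)\deg Q'=r\bigl(n-\textstyle\sum_{Q'\mid Q}f(Q'\mid Q)\bigr)\le r(n-1)$, and the Hurwitz genus formula with $g(L)=g(E)$ gives $2g(F)-2\le n(2g(E)-2)+r(n-1)$, which rearranges to the asserted bound.

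Everything thus reduces to producing $N$, for which it is enough to have $n\mid\exp(G/M)$; I would deduce this from the \emph{key claim} $M\cap H=H_{n}$, since then $MH/M\cong H/H_{n}$ is a cyclic subgroup of order $n$ of the finite abelian group $G/M$. The inclusion $H_{n}\subseteq M\cap H$ is clear. For the reverse, write a general element of $M$ as $h\cdot g$ with $h\in H_{n}$ and $g=\prod_i(\mathrm{Frob}_{\widetilde P_i})^{a_i}$; if $hg\in H$ then $g\in H=\ker(G\to\Cl(A_L))$, which (using $\mathrm{Frob}_{\widetilde P_i}=[\wp_{\widetilde P_i}]$) means $\prod_i\wp_{\widetilde P_i}^{a_i}=(f)$ is principal in $A_L$, i.e.\ $\mathrm{div}_L(f)=\sum_i a_i\widetilde P_i-(\sum_i a_i)\infty$, and then a direct computation with the definition of $\Cl^{+}_{Q}(A_L)$ identifies $g$, up to inversion, with the image in $H\cong(A_L/\mathcal Q)^{*}$ of $\bar f:=f\bmod\mathcal Q$, where $f$ is normalized by $\sgn(f)=1$. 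So it suffices to show $\bar f\in H_{n}=\F_{\ell^{r}}^{*}\cdot\F_{\ell^{2}}^{*}$. Now $\mathrm{div}_L(f)$ is supported on places fixed by the nontrivial $\tau\in\Gal(L/E)$, so $f^{\tau}/f=c\in\F_{\ell^{2}}^{*}$, and $\tau^{2}=\mathrm{id}$ forces $c^{\ell+1}=N_{\F_{\ell^{2}}/\F_\ell}(c)=1$; Hilbert~90 for $L/E$ writes $c=g_{0}^{\tau}/g_{0}$, and after rescaling $g_{0}$ by an element of $E^{*}$ I may assume $g_{0}$ is a $\mathcal Q$-unit, whence $f=g_{0}e$ with $e\in E^{*}$ a $\mathcal Q$-unit. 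Reducing modulo $\mathcal Q$: $\bar e\in(A_E/Q)^{*}=\F_{\ell^{r}}^{*}$, while $\tau$ acts on $A_L/\mathcal Q=\F_{\ell^{2r}}$ as $x\mapsto x^{\ell^{r}}$ (it fixes $\F_{\ell^{r}}$ and is nontrivial), so $\bar g_{0}^{\,\ell^{r}-1}=c$ and hence $\bar g_{0}^{\,(\ell^{r}-1)(\ell+1)}=c^{\ell+1}=1$, forcing $\bar g_{0}\in H_{n}$; therefore $\bar f=\bar g_{0}\bar e\in H_{n}$. This key claim is where I expect the real work to lie; the surrounding class-field-theory bookkeeping I would isolate into a couple of short lemmas.
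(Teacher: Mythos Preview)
Your argument is correct, but it takes a genuinely different route to the key step than the paper does.

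Both proofs start the same way: pass to $L=\F_q\cdot E$, take the narrow ray class field $L^{Q}$, and look for a subgroup $N$ of $G=\Gal(L^{Q}/L)\cong\Cl^{+}_{Q}(A_L)$ with cyclic quotient of order $n=(\ell^{r}+1)/(\ell+1)$, containing the inertia at $\infty$ and all Frobenii of rational places coming from $E$. The divergence is in how this subgroup is found. The paper writes down a \emph{single natural subgroup} directly: it takes $\F_q^{*}\cdot\Cl^{+}_{Q}(A_1)$, where $A_1$ is the ring of $\infty$-integers in $E$, embedded functorially in $\Cl^{+}_{Q}(A_L)$. That this subgroup contains every $\mathrm{Frob}_{\widetilde P}$ is then tautological (the place comes from $E$), and the inertia/ramification computations (your $I_\infty\subseteq H_n$ and $e(R\mid Q)=n$) reduce to the group-theoretic identity $|\F_q^{*}\cdot(A_1/\mathcal Q)^{*}|=(\ell+1)(\ell^{r}-1)$ inside $(A_L/\mathcal Q)^{*}\cong\F_{\ell^{2r}}^{*}$. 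The existence of a cyclic quotient of order $n$ then follows because the image of the (cyclic) inertia group of $Q$ already has order $n$. In contrast, you build $M$ from generators and prove the crucial fact $M\cap H=H_{n}$ by a hands-on Galois-descent computation: Hilbert~90 for $L/E$, followed by analysing how $\tau$ acts on residues modulo $\mathcal Q$.

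What each approach buys: the paper's argument is shorter and more structural---once one sees that $\Cl^{+}_{Q}(A_1)$ is the ``right'' subgroup, all the splitting and ramification statements drop out of functoriality, with no element-chasing. Your argument is more explicit and self-contained (it never needs to name the map $\Cl^{+}_{Q}(A_1)\to\Cl^{+}_{Q}(A_L)$ or verify it is injective), and the Hilbert~90 step makes transparent \emph{why} the Frobenii of $E$-rational places land in $H_{n}$. Incidentally, your Hilbert~90 step can be shortened: since $c\in\F_{\ell^{2}}^{*}$ with $c^{\ell+1}=1$, one may take $g_{0}\in\F_{\ell^{2}}^{*}$ with $g_{0}^{\ell-1}=c$, whence $\bar g_{0}=g_{0}\in\F_{\ell^{2}}^{*}\subseteq H_{n}$ immediately, avoiding the rescaling-to-a-$\mathcal Q$-unit manoeuvre.
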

%
%
%
\begin{proof}
Let us outline the idea behind the proof. First we choose a place $Q$
of $E$ of odd degree $r$ and consider the constant extension
$\F_q\cdot E$. Then $Q$ remains a place of degree $r$ in $\F_q\cdot E$
since $r$ is odd (see \cite[Theorem 1.5.2(iii)(a)]{NX01}). We take the
narrow ray class field of $ \F_q \cdot E$ modulo $Q$ and then descend
to a subfield $K$ which is the fixed field of a certain subgroup of
the Galois group. This is done to ensure that the rational places of
$E$ which can be regarded as rational places of $\F_q \cdot E$ split
completely in $K/\F_q \cdot E$ (note that $K$ may not be a cyclic
extension over $\F_q \cdot E$). To obtain a cyclic extension over
$\F_q \cdot E$, we need to descend to a further subfield of $K$, which
will be our claimed function field $F$.  The reason why we use a place
$Q$ of odd degree is that, in the case of odd $r$, the narrow ray
class group of $\F_q \cdot E$ modulo $Q$ is a cyclic Galois extension
over its Hilbert class field. In the end, we can construct our desired
function field such that it is a cyclic extension over $\F_q \cdot E$.

Put $E_1:=E$ and consider the constant field extension $E_2:=\F_q\cdot E_1$. Then $\infty$ remains a rational place in $E_2$ and $Q$ remains a place of degree $r$ in $E_2$ as well.

Let $A_i$ be the ring in $E_i$ defined by
\[A_i:=\{x\in E_i:\; \nu_P(x)\ge 0 \ \mbox{ for all $P\not=\infty$}\}\]
and
let $H_i$ be the Hilbert class field of $A_i$ of $E_i$ with respect to $\infty$. Consider the narrow-ray class field $E_i^Q=H_i(\Lambda_{\phi}(\Q))$ where $\Q$ is the ideal corresponding to place $Q$. Then we can identify Gal$(E_{i}^Q/E_{i})$ with $\Cl^+_{{Q}}(A_{i})$.

 Now let
$K$ be the subfield of the extension $E_{2}^Q/E_2$ fixed by the subgroup
$G=\F_q^*\cdot\Cl^+_{{\mQ}}(A_1)$ of $\Cl^+_{{Q}}(A_{2})$.
We have
$$|G|=\frac{|\F_q^*|\cdot|\Cl^+_{{Q}}(A_1)|}{|\F_q^*\cap \Cl^+_{{Q}}(A_1)|}=\frac{(\ell^2-1) \cdot (\ell^r-1) h_{E_1}}{\ell-1 } = (\ell+1)(\ell^{r}-1)h_{E_1}$$
and so
\begin{equation}\label{eq:4.13}
[K:E_{2}]=\frac{|\Cl^+_{{D}}(A_{2})|}{|G|}=\frac{(q^r-1) h_{E_2}}{|G|}
=
\frac{\ell^{r}+1}{\ell+1}\times\frac{h_{E_2}}{h_{E_1}}.\end{equation}

Let $P_{\infty}$ be a place of $K$ lying over $\infty$. Then
the inertia group of $P_{\infty}$ in the extension $E_2^Q/K$ is $\F_q^*
\cap G$, and so the ramification index $e(P_{\infty}|\infty)$ of $P_{\infty}$
over $\infty$ is given by
$$e(P_{\infty}|\infty)=\frac{|\F_q^*|}{|\F_q^*\cap G|}=\frac
{|\F_q^*\cdot G|}{|G|}=\frac{|\F_q^*\cdot \Cl^+_{{Q}}(A)|}{|G|}=1,$$
i.e., $\infty$ is unramified in $K/E_2$.

Let $R$ be a place of $K$ lying over $Q$. Since the inertia group of $Q$ in
$E_2^Q/E_2$ is $(A_{2}/{\Q})^*$ by the theory of narrow ray class fields, the inertia group of $R$ in
$E_2^Q/K$ is $(A_{2}/{\mQ})^{*}\cap G=\F_q^*\cdot(A/{\Q})^*$.
Thus, the ramification index $e(R|Q)$ of $R$ over $Q$ is given by
\begin{equation}\label{eq:4.14}
e(R|Q)=\frac{|(A_{2}/{\Q})^{*}|}{|\F_q^*\cdot(A_1/{\Q})^{*}|}=\frac{|(A_{2}/{\Q})^{*}|
\cdot|\F_q^*\cap(A_1/{\Q})^{*}|}{|\F_q^*|\cdot|(A_1/{\Q})^{*}|}=
\frac{(q^r-1)(\ell-1)}{(q-1)(\ell^r-1)}=
\frac{\ell^{r}+1}{\ell+1}.
\end{equation}
Since $\infty,Q$ are the only ramified places in $E_2^Q/E_2$, and $\infty$ is unramified in $K/E_2$, we conclude that the place $Q$ is the only ramified place in $K/E_2$ with ramification index $(\ell^r+1)/(\ell+1)$.

Now, all $\F_{\ell}$-rational places of $E_1$ can be viewed as $\F_q$-rational places of $E_2$ and furthermore they split completely in $K$. This is because for a rational place $P$ of $E_1$ with $P\neq \infty,Q$, from Proposition \ref{prop:CFT} and our construction, it follows that the Frobenius automorphism of $P$ is contained in the subgroup $\Gal(E_2^Q/K)$.
Therefore, the Frobenius automorphism of $P$ in the extension $K/E_2$ is the identity, and therefore $P$ must split completely in $K/E_2$.

 Since the decomposition group of $Q$ in $E_2^Q/E_2$ is isomorphic to the cyclic group $(A/{\Q})^*$, the decomposition group of $Q$ in $K/E_2$ is cyclic as well. The inertia group of $Q$, which is a subgroup of the decomposition group of $Q$, has order $\frac{\ell^{r}+1}{\ell+1}$ by (\ref{eq:4.14}). Thus, the Galois group $\Gal(K/E_2)$ contains a cyclic subgroup of order $\frac{\ell^{r}+1}{\ell+1}$. This implies that there exists a subfield $F$ of $K/E_2$ such that $\Gal(F/E_2)$ is a cyclic group of order $\frac{\ell^{r}+1}{\ell+1}$.
 It is clear that all $\F_{\ell}$-rational places of $E_1$ split completely in $F$ as well. Hence
\[N(F/\F_q)\ge [F:E_2]N(E_1/\F_{\ell})= \frac{\ell^{r}+1}{\ell+1}N(E_1/\F_{\ell}) \ .\]
Moreover, the place $Q$ is the only ramified place in $F/E_2$ (since it is the only ramified place in $K/E_2$) and it is tamely ramified with the ramification index at most $[F:E_2]$.
Hence, we can apply the Hurwitz genus formula to
the extension $F/E_2$ and get
\[2g(F)-2  \le (2g(E_2)-2)[F:E_2]+r([F:E_2]-1).\]
The desired result follows from the fact that $g(E_2)=g(E_1)$.
\end{proof}

The following theorem provides the family of function fields that we required to construct our folded algebraic geometric codes in Theorem \ref{thm:assum-list-size}.

\begin{theorem}\label{2.1} Let $\ell$ be a prime power and let $q=\ell^2$. Assume that there is a family $\{E/\F_{\ell}\}$  of function fields such that $g(E)\rightarrow\infty$ and $N(E/\F_{\ell})/g(E)\rightarrow A$ for a positive real $A$. Then for any odd integer $r$ with $r>\log(2+7g(E))/\log(\ell)$,  there exists a function field $F/\F_{q}$  such that $F$ is a finite extension of $\F_q\cdot E$ of degree $e:=(\ell^r+1)/(\ell+1)$ and
\begin{itemize}
\item[{\rm (i)}] $g(F)\rightarrow\infty$ and $g(F)\le (g(E)-1)e+r(e-1)/2+1$.
\item[{\rm (ii)}] $N(F/\F_q)\ge eN(E/\F_{\ell})$.
\item[{\rm (iii)}] $F/(\F_q\cdot E)$ is a cyclic Galois extension of degree $e$.
\end{itemize}
In particular, we have $\liminf_{g(F)\rightarrow\infty}N(F/\F_q)/g(F)\ge A/(1+c)$ if $r/g(E)\rightarrow 2c$ for a constant $c\ge 0$.
\end{theorem}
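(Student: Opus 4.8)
The plan is to deduce the theorem from Lemma~\ref{2.1.3}, whose three conclusions are exactly items (i)--(iii) once its hypotheses are supplied; after that only a passage to the limit remains. Thus, for each member $E$ of the family and each admissible odd $r$, I must exhibit inside $E$ a rational place (to play the role of ``$\infty$'' in Lemma~\ref{2.1.3}) and a place of degree \emph{exactly} $r$ (to play the role of ``$Q$'').

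The rational place is free: $N(E/\F_\ell)/g(E)\to A>0$ together with $g(E)\to\infty$ forces $N(E/\F_\ell)\to\infty$, so all but finitely many members of the family have rational places, and the finitely many exceptions may be discarded. For a place of degree $r$ I would run the standard counting argument. Write $B_d$ for the number of degree-$d$ places of $E$, so that $N_d:=N(\F_{\ell^d}\cdot E/\F_{\ell^d})=\sum_{d'\mid d}d'B_{d'}$ is the number of $\F_{\ell^d}$-points of $E$; Möbius inversion gives $rB_r=\sum_{d\mid r}\mu(r/d)N_d\ge N_r-\sum_{d\mid r,\,d<r}N_d$. Since $r$ is odd, every proper divisor of $r$ is at most $r/3$, so the Hasse--Weil estimate $|N_d-(\ell^d+1)|\le 2g(E)\ell^{d/2}$ bounds the subtracted sum by $O(\ell^{r/3})+O\!\bigl(g(E)\ell^{r/6}\bigr)$, while $N_r\ge \ell^r+1-2g(E)\ell^{r/2}$. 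Absorbing the lower-order terms, one gets $rB_r\ge \ell^r-\bigl(2+7g(E)\bigr)\ell^{r/2}$, which is positive for $r$ above the stated threshold and in particular throughout the regime where $r$ has order $g(E)$, the one relevant for the applications. Hence $B_r\ge1$ and a place $Q$ of degree $r$ exists.

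Feeding $\infty$ and $Q$ into Lemma~\ref{2.1.3} yields a function field $F/\F_q$ that is a cyclic extension of $\F_q\cdot E$ of degree $e=(\ell^r+1)/(\ell+1)$ with $N(F/\F_q)\ge e\,N(E/\F_\ell)$ and $g(F)\le(g(E)-1)e+\tfrac r2(e-1)+1$; these are exactly (iii), (ii), and the inequality in (i). The upper bound on $g(F)$ does not give $g(F)\to\infty$, so for that I would instead apply the Hurwitz genus formula to the finite separable extension $F/(\F_q\cdot E)$: the constant field does not grow, $g(\F_q\cdot E)=g(E)$, and since the different has nonnegative degree we get $2g(F)-2\ge e(2g(E)-2)$, which is $\ge 2g(E)-2$ once $g(E)\ge1$; hence $g(F)\ge g(E)\to\infty$.

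For the final assertion, dividing the two estimates gives
\[
\frac{N(F/\F_q)}{g(F)}\ \ge\ \frac{e\,N(E/\F_\ell)}{(g(E)-1)e+\tfrac r2(e-1)+1}\ =\ \frac{N(E/\F_\ell)}{(g(E)-1)+\tfrac r2\cdot\tfrac{e-1}{e}+\tfrac1e}\,,
\]
and since $r>\log(2+7g(E))/\log\ell\to\infty$ forces $e=(\ell^r+1)/(\ell+1)\to\infty$, we have $(e-1)/e\to1$ and $1/e\to0$; together with the hypotheses $r/g(E)\to 2c$ and $N(E/\F_\ell)/g(E)\to A$, the right-hand side tends to $A/(1+c)$, yielding $\liminf N(F/\F_q)/g(F)\ge A/(1+c)$. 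The one genuinely delicate step is the Hasse--Weil bookkeeping that pins down when $B_r\ge 1$; the rest is a direct invocation of Lemma~\ref{2.1.3} and routine limits, with the sole caveat that $g(F)\to\infty$ has to be extracted from Hurwitz rather than from the one-sided genus bound of that lemma.
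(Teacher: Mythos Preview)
Your approach is the paper's: its proof simply cites \cite[Corollary~5.2.10]{stich-book} for the existence of a degree-$r$ place under the stated threshold and then invokes Lemma~\ref{2.1.3}, leaving $g(F)\to\infty$ and the final limit computation implicit; your Hurwitz lower bound and explicit passage to the limit fill those in correctly. One arithmetic slip: from $rB_r\ge \ell^r-(2+7g(E))\ell^{r/2}$ positivity requires $\ell^{r/2}>2+7g(E)$, i.e.\ $r>2\log(2+7g(E))/\log\ell$, which is \emph{twice} the threshold in the statement, so ``positive for $r$ above the stated threshold'' is not literally right---though, as you already note, this is immaterial in the regime $r\asymp g(E)$ used downstream.
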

\begin{proof} The result follows directly from Lemma \ref{2.1.3} and the fact that there exists a place of degree $r$ in $E$ as long as $r>\log(2+7g(E))/\log(\ell)$ (see \cite[Corollary 5.2.10]{stich-book}). This completes the proof.
\end{proof}

\subsection{Chebotarev Density Theorem}
Given Theorem \ref{2.1}, to show that the family of function fields with property (P3) exists, it remains to find a large set $S$ of places of $\PP_F$ satisfying (P3)(ii). In order to accomplish this task, we need the explicit form of the Chebotarev Density Theorem.

Let $F/L$ be a Galois extension of degree $e$ of function fields over $\F_q$. Assume that $\F_q$ is the full constant field of both $F$ and $L$. Let $t$ be a separating transcendence element over $\F_q$. Let $d=[L:\F_q(t)]$.

For a place $Q$ of $F$ lying over $P$ of $L$, let $\left[\frac{F/L}{Q}\right]$ be the Frobenius of $Q$. Then for any $\Gs\in \Gal(F/L)$, the Frobenius of $\Gs(Q)$ is $\Gs\left[\frac{F/L}{Q}\right]\Gs^{-1}$. Thus, the conjugacy class  $\left\{\Gs\left[\frac{F/L}{Q}\right]\Gs^{-1}:\; \Gs\in \Gal(F/L)\right\}$ is determined by $P$. We denote this conjugacy by $\left[\frac{F/L}{P}\right]$.

Fix a conjugacy class $C$ of $\Gal(F/L)$, let $M_h(C)$ denote the number of places $P$ of degree $h$ in $L$ that are unramified in both $F/L$ and $L/\F_q(t)$  such that $\left[\frac{F/L}{P}\right]=C$. Then we have the following result \cite[Proposition 6.4.8]{FJ08} and \cite{MS94}.

\begin{theorem}[Chebotarev Density Theorem]\label{2.2}
One has
\begin{equation}
\left|M_h(C)-\frac{|C|}{e h}q^h\right|\le \frac{2|C|}{e h}(e+g_F)q^{h/2}+e(2g_L+1)q^{h/4}+g_F+de,\end{equation}
where $g_F$ and $g_L$ denote the genera of $F$ and $L$, respectively.
\end{theorem}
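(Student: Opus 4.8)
This is the effective Chebotarev density theorem for function fields, quoted essentially verbatim from \cite[Proposition 6.4.8]{FJ08} (see also \cite{MS94}); in the paper we would simply cite it, and here I only sketch how its proof goes. The plan is to attach to the Galois extension $F/L$, for each irreducible character $\chi$ of $G:=\Gal(F/L)$ (so $|G|=e$), the Artin $L$-function $L(u,\chi)=\prod_{P}\det\bigl(1-u^{\deg P}\,\mathrm{Frob}_P\mid V_\chi^{I_P}\bigr)^{-1}$ with $u=q^{-s}$, to control its zeros via Weil's Riemann Hypothesis for curves, and to recover $M_h(C)$ by character orthogonality. By the Artin formalism and Weil's theorem, for $\chi\neq\mathbf 1$ the function $L(u,\chi)$ is a polynomial $\prod_i(1-\alpha_{i,\chi}u)$ whose reciprocal roots split into ``Frobenius'' roots with $|\alpha_{i,\chi}|=q^{1/2}$ (at most $n_\chi:=(2g_L-2)\chi(1)+\deg\mathfrak f_\chi$ of them, $\mathfrak f_\chi$ the Artin conductor) and ``trivial'' roots with $|\alpha_{i,\chi}|\le 1$ arising from the bad Euler factors at ramified places (at most $O(\deg\mathfrak f_\chi+\chi(1))$ of them); for $\chi=\mathbf 1$, $L(u,\mathbf 1)$ is the zeta function $Z_L(u)$, whose logarithmic derivative supplies the main term $q^h$.

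First I would pass to logarithmic derivatives, $-u\frac{d}{du}\log L(u,\chi)=\sum_{m\ge1}c_m(\chi)u^m$. Comparing the factorization with the Euler product gives, on the one hand, $c_m(\chi)=\delta_{\chi,\mathbf 1}(q^m+1)-\sum_i\alpha_{i,\chi}^m$, and on the other hand $c_m(\chi)=\sum_{k\deg P=m}(\deg P)\,\chi(\mathrm{Frob}_P^{\,k})$ over unramified $P$, up to a correction of size $O(\deg\mathfrak f_\chi)$ from ramified $P$. Writing $\mathbf 1_C=\frac{|C|}{e}\sum_\chi\overline{\chi(C)}\chi$ and using orthogonality, $\frac{|C|}{e}\sum_\chi\overline{\chi(C)}\,c_h(\chi)$ equals $\sum_{k\deg P=h,\ \mathrm{Frob}_P^{\,k}\in C}\deg P$, which is $h\,M_h(C)$ up to the proper prime powers $k\ge2$ (total contribution $O(q^{h/2})$) and the ramification corrections.

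Equating the two expressions for $c_h(\chi)$, summing against $\overline{\chi(C)}$, and using $|\chi(C)|\le\chi(1)$ yields
\[
\bigl|h\,M_h(C)-\tfrac{|C|}{e}q^h\bigr|\ \le\ \tfrac{|C|}{e}\sum_\chi\chi(1)\,n_\chi\,q^{h/2}+\tfrac{|C|}{e}\sum_\chi\chi(1)\,O(\deg\mathfrak f_\chi+\chi(1))+O(q^{h/2})+(\text{lower-order terms}).
\]
Then I would invoke $\sum_\chi\chi(1)^2=e$, the conductor--discriminant formula $\sum_\chi\chi(1)\deg\mathfrak f_\chi=\deg\mathfrak d_{F/L}$, and the Hurwitz genus formula $\deg\mathfrak d_{F/L}=2g_F-2-e(2g_L-2)$; these collapse $\sum_\chi\chi(1)n_\chi=(2g_L-2)e+\deg\mathfrak d_{F/L}=2g_F-2$, turning the $q^{h/2}$ term into $\tfrac{|C|}{e}(2g_F-2+O(e))q^{h/2}$, which is absorbed by $\tfrac{2|C|}{e}(e+g_F)q^{h/2}$. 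The remaining pieces --- the trivial-root count, the proper prime powers, the ramified places, and the descent to a rational subfield $\F_q(t)$ (this is where the parameter $d=[L:\F_q(t)]$ enters) --- produce the $e(2g_L+1)q^{h/4}$ term and the additive $g_F+de$. Dividing by $h$ gives the stated inequality. (Alternatively, as in \cite{FJ08}, one counts the relevant closed points of an auxiliary variety over $\F_{q^h}$ and applies the effective Lang--Weil estimate; the essential input is again RH for curves.)

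The main obstacle is the bookkeeping of error terms rather than any single idea. One must (i) define the bad Euler factors at the ramified places so that polynomiality and RH for curves apply to $L(u,\chi)$ verbatim --- this is where Brauer induction, reducing Artin $L$-functions to abelian Hecke $L$-functions over constant-field extensions of $L$, is used; and (ii) chase the constants so that the Frobenius roots, the trivial roots, the proper prime powers, the ramified places, and the pole of $Z_L$ assemble into \emph{exactly} $\frac{2|C|}{eh}(e+g_F)q^{h/2}+e(2g_L+1)q^{h/4}+g_F+de$, which forces the conductor--discriminant and Hurwitz formulas to be used at precisely the right step. Since we only need the bound as a black box, in the paper we cite \cite{FJ08,MS94}.
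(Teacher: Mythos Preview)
Your proposal is aligned with the paper: the paper does not prove this theorem at all but simply quotes it from \cite[Proposition 6.4.8]{FJ08} and \cite{MS94}, exactly as you anticipate in your opening sentence. The sketch you supply of the Artin $L$-function argument (Weil's RH for the reciprocal roots, character orthogonality to isolate the class $C$, conductor--discriminant plus Hurwitz to collapse $\sum_\chi \chi(1)n_\chi$ to $2g_F-2$) is the standard route and is correct in outline, but it goes well beyond what the paper itself contains; for the paper's purposes the citation alone suffices.
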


Finally, we are able to show existence of a family of function fields with (P3).

\begin{theorem}\label{2.3} There exists a family of function fields with {\rm (P3)}. More precisely speaking, we have the following result.

 Let $\ell$ be a square prime power and let $\{E/\F_{\ell}\}$ be the Garcia-Stichtenoth tower given in {\rm \cite{GS95}}. Let $\{F/\F_q\}$ be the family of function fields constructed in Theorem {\rm \ref{2.1}}. Put $L:=\F_q\cdot E$, $n:=N(E/\F_{\ell})$ and denote by $\Gs$ a generator of $\Gal(F/L)$. Let $r= 2\lceil n/(\sqrt{\ell}-1)\rceil+1$ and $h=3r$. Then  there exists a set $S$ of places of $L$ such that
 \begin{itemize}
\item[{\rm (i)}] $|S|\ge q^r$ and $\deg(R)=h$ for all places $R$ in $S$.
\item[{\rm (ii)}]  For every place $R$ in $S$, there is a unique place $P$ of $F$ of degree $eh$ lying over $R$ and $\left[\frac{F/L}{P}\right]=\Gs^{-1}$.
\item[{\rm (iii)}] $\liminf N(F/\F_q)/g(F)\ge (\sqrt{\ell}-1)/2=(q^{1/4}-1)/2$.
 \end{itemize}
\end{theorem}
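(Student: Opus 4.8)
The plan is to derive parts (i) and (ii) from a single application of the effective Chebotarev Density Theorem (Theorem~\ref{2.2}) to the cyclic extension $F/L$ produced by Theorem~\ref{2.1}, and to read off part (iii) from the ``in particular'' clause of Theorem~\ref{2.1}. Together with Theorem~\ref{2.1}(ii)--(iii), these three statements give a family satisfying Property (P3).

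First I would record the arithmetic of $F/L$ that makes the reduction work. By Theorem~\ref{2.1}(iii), $\Gal(F/L)$ is cyclic of order $e=(\ell^r+1)/(\ell+1)$ with generator $\sigma$; since $\sigma^{-1}$ also generates this group, $C:=\{\sigma^{-1}\}$ is a one-element conjugacy class. The key elementary fact is: if $R$ is a place of $L$ that is unramified in $F/L$ and satisfies $\left[\frac{F/L}{R}\right]=C$, then the decomposition group of any place $P$ of $F$ above $R$ equals $\langle\sigma^{-1}\rangle=\Gal(F/L)$, so $R$ is \emph{inert}: there is a unique $P\mid R$, with residue degree $e$, hence $\deg P=e\deg R$. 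Choosing $\deg R=h:=3r$ gives $\deg P=3re$, and by the defining property of the Frobenius, $\sigma^{-1}(z)\equiv z^{|L_R|}=z^{q^{3r}}\pmod P$ for every $z\in F$ with $\nu_P(z)\ge 0$ --- exactly the congruence in (P3)(ii). Thus (i) and (ii) reduce to producing at least $q^r$ places $R$ of degree $3r$ in $L$ that are unramified in $F/L$ (and in $L/\F_q(t)$ for a suitable separating transcendence element $t$) with $\left[\frac{F/L}{R}\right]=\sigma^{-1}$; the set $S$ is then this set of $R$'s, equivalently the set of the corresponding inert places of $F$.

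Next I would apply Theorem~\ref{2.2} with this $C$, $|C|=1$, and $h=3r$. Fix $t$ with $d=[L:\F_q(t)]\le g_L+1=g(E)+1$ (possible by Riemann--Roch, after taking $p$-th roots to ensure separability without increasing the pole degree). Substituting $e\le\ell^{r-1}$ (immediate from the formula for $e$), $g_L=g(E)$, and $g_F\le(g(E)-1)e+r(e-1)/2+1$ from Theorem~\ref{2.1}(i), and using that $r\sim 2g(E)$ so that $g(E),g_F,g_L,d$ are all at most a polynomial in $r$ times $\ell^{r}$, one checks that the main term $q^h/(eh)=\ell^{6r}/(3re)\ge\ell^{5r+1}/(3r)$ dominates each of the three error terms of Theorem~\ref{2.2} --- which are $O(\ell^{3r})$, $O(r\ell^{5r/2-1})$, and $O(r\ell^{r})$ --- by a factor that tends to $\infty$ with $r$. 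Hence $M_h(C)\ge\tfrac12\ell^{5r+1}/(3r)\ge q^r$ once $g(E)$, and therefore $r$, is large enough; this proves (i) and (ii). For (iii): the Garcia--Stichtenoth tower has $N(E/\F_\ell)/g(E)\to\sqrt\ell-1$, and the choice $r=2\lceil N(E/\F_\ell)/(\sqrt\ell-1)\rceil+1$ forces $r/g(E)\to 2$, i.e.\ $c=1$ in the notation of Theorem~\ref{2.1}; hence $\liminf N(F/\F_q)/g(F)\ge(\sqrt\ell-1)/(1+c)=(\sqrt\ell-1)/2=(q^{1/4}-1)/2$.

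The only step demanding genuine care --- and the main obstacle --- is the error-term bookkeeping in the Chebotarev estimate: one must verify that with $h=3r$ the main term $q^h/(eh)\approx\ell^{5r}/r$ really overwhelms all three error terms simultaneously, which hinges on the (sharp) fact that $e$, $g_F$, and $g_L$ are each bounded by a polynomial-in-$r$ multiple of $\ell^{r}$ while $q^h=\ell^{6r}$. The factor $3$ in $h=3r$ is more generous than Chebotarev alone needs (already $h\gtrsim\tfrac32 r$ would make $M_h(C)\to\infty$); it is dictated by the later requirement $\sum_{R\in T}\deg R>le$ with $|T|=\lceil\sqrt\ell-1\rceil$ in the proof of Theorem~\ref{thm:assum-list-size}. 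The remaining ingredients --- that $\sigma^{-1}$ is a generator, that Frobenius $=\sigma^{-1}$ forces inertness of the right degree, and that the congruence in (P3)(ii) is just the definition of the Frobenius --- are routine.
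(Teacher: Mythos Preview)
Your proposal is correct and follows essentially the same route as the paper: apply the effective Chebotarev Density Theorem (Theorem~\ref{2.2}) to the cyclic extension $F/L$ with the singleton conjugacy class $C=\{\sigma^{-1}\}$ and $h=3r$, observe that Frobenius equal to a generator forces inertness (giving (i) and (ii)), and deduce (iii) from Theorem~\ref{2.1} with $c=1$ since $r/g(E)\to 2$. The only cosmetic difference is that the paper bounds $d=[L:\F_q(t)]\le g(E)$ using the explicit structure of the Garcia--Stichtenoth tower, whereas you obtain $d\le g_L+1$ via Riemann--Roch; both suffice for the error-term estimate, which you spell out more explicitly than the paper does.
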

\begin{proof} Let $\{E/\F_{\ell}\}$ be the well-known Garcia-Stichtenoth tower \cite{GS95}. Then one has $N(E/\F_{\ell})/g(E)\rightarrow\sqrt{\ell}-1$ with $g(E)\rightarrow\infty$ and $[E:\F_{\ell}(t)]\le g(E)$ for a separating transcendence element over $\F_{\ell}$. Thus, $d=[L:\F_q(t)]\le g(E)=g(L)$. By our choice of parameters $h, r$, we find that
\[\frac{1}{e h}q^h-\left(\frac{2}{e h}(e+g_F)q^{h/2}+e(2g_L+1)q^{h/4}+g_F+de\right)\ge q^r.\]
By Theorem \ref{2.2}, there exists a set $S$ of places of $L$ with  $|S|\ge q^r$ such that $\deg(R)=h$ and $\left[\frac{F/L}{R}\right]=\Gs^{-1}$  for every place $R$ in $S$. Let $P$ be a place of $F$ lying over $R$. The Frobenius $\Gs^{-1}$ of $P$   belongs to $\Gal(F/Z)$, where $Z$ is the decomposition field of $P$ in $F/L$. Since the order of $\Gs^{-1}$ is $e=[F:L]$, we must have $Z=L$ and hence the relative degree $f(P|R)$ is $e$. So $P$ is the only place of $F$ lying over $R$.

Since $r/g(E)\rightarrow 2$, we have $\liminf N(F/\F_q)/g(F)\ge (\sqrt{\ell}-1)/2$ by Theorem \ref{2.1}.
\end{proof}

\section{Encoding and Decoding}\label{sec:ED}
We have not considered encoding and decoding of the folded algebraic geometric codes constructed in Section \ref{sec:ALD}. This section is devoted to the computational aspects of encoding and decoding of our folded codes.
\subsection{Encoding}
Let us consider the folded algebraic geometric code given in the proof of Theorem \ref{thm:assum-list-size}, where the divisor $D$ is $l\sum_{P_{\infty}|\infty, P_{\infty}\in\PP_F}P$ and the Riemann-Roch space is $\cL(lD)$. To encode, we assume that  $le>2g-1$ and there is an algorithm to find a basis $\{z_1,z_2,\dots,z_k\}$ of $\cL(lD)$ with $k= le-g+1$.

Furthermore, we assume that, for every point $P_i^{\sigma^j}$ and each function $f$ with $\nu_{P_i^{\sigma^j}}(f)\ge 0$, there is an efficient algorithm to evaluate $f$ at $P_i^{\sigma^j}$, i.e., find $f(P_i^{\sigma^j})$. For a function $f$ and a rational place $P$ with $\nu_P(f)\ge 0$,  the algorithm of evaluating  $f$ at  $P$ consists of
\begin{itemize}
\item [(i)] Finding a local parameter $t$ at $P$ (recall that a function $t$ is called a local parameter at  $P$ if $\nu_P(t)=1$).
\item [(ii)] Finding the unique element $\Ga\in\F_q$ such that $\nu_P\left(\frac{f-\Ga}t\right)\ge 0$ (note that this unique element $\Ga$ is equal to  $f(P)$).
\end{itemize}

\subsection{Decoding}
As we have seen, encoding is easy as long as we have an efficient algorithm to compute a basis of the Riemann-Roch space and evaluation at rational places. However, we need some further work for decoding.

The idea of decoding is to solve the equation (\ref{eq:alg-eqn})
through local expansions at a point. Let us briefly introduce local
expansions first. The reader may refer to \cite[pages 5-6]{NX01} for the detailed
result on local expansions.
Let $F/\F_q$ be a function field and let
$P$ be a rational place.
For a nonzero function $f\in F$ with $\nu_P(f)\ge
v$, we have
$\nu_P\left(\frac f{t^v}\right)\ge 0.$
Put
$a_v=\left(\frac f{t^v}\right)(P),$
i.e., $a_v$ is the value of the function $f/t^v$ at $P$.  Note that the function $f/t^v-a_v$ satisfies
$\nu_P\left(\frac f{t^v}-a_v\right)\ge 1,$
hence we know that
$ \nu_P\left(\frac {f-a_vt^v}{t^{v+1}}\right)\ge 0.$
Put
$a_{v+1}=\left(\frac{f-a_vt^v}{t^{v+1}}\right)(P).$
Then  $\nu_P(f-a_vt^v-a_{v+1}t^{v+1})\ge v+2$.

Assume that we have obtained a sequence $\{a_r\}_{r=v}^m$ ($m>v$)
of elements of $\F_q$ such that
$\nu_P(f-\sum_{r=v}^ka_rt^r)\ge k+1$
for all $v\le k\le m$.
Put
$a_{m+1}=\left(\frac{f-\sum_{r=v}^ma_rt^r}{t^{m+1}}\right)(P).$
Then  $\nu_P(f-\sum_{r=v}^{m+1}a_rt^r)\ge m+2$.
In this way we continue our construction of the $a_r$. Then we obtain
an infinite sequence $\{a_r\}_{r=v}^{\infty}$ of elements of $\F_q$ such
that
$
\nu_P(f-\sum_{r=v}^ma_rt^r)\ge m+1
$
for all $m\ge v$.
We summarize the above  construction in the formal expansion
\begin{equation}
\label{eqn_1.1}
f=\sum_{r=v}^{\infty}a_rt^r,
\end{equation}
which is called the { local expansion} of $f$ at $P$.

 It is clear
that local expansions of a function depend on choice of the local
parameters $t$. Note that if a power series
$\sum_{i=v}^{\infty}a_it^i$ satisfies $\nu_P(f-\sum_{i=v}^ma_it^i)\ge
m+1$ for all $m\ge v$, then it is a local expansion of $f$. The above procedure shows that finding a local expansion at a rational place is very efficient  as long as the computation of evaluations of functions at this place is easy.

The following fact plays an important role in our decoding.

\begin{lemma}\label{lem:conj-expansion}
Let $F/\F_q$ be a function field and let $\s\in {\rm Aut}(F/\F_q)$ be an automorphism. Let $P, P^{\s^{-1}}$ be two distinct rational places. Assume that $t$ is a common local parameter of $P$ and  $P^{\s}$, i.e., $\nu_P(t)=\nu_{P^{\s}}(t)=1$ such that $t^{\s}=t$. Suppose that $f\in F$ has a local expansion $\sum_{i=0}^{\infty}a_it^i$ at $P^{\s}$ for some $a_i\in \F_q$, then the local expansion of $f^{\s^{-1}}$ at $P$ is $\sum_{i=0}^{\infty}a_it^i$.
\end{lemma}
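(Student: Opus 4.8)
The plan is to track how an automorphism $\s$ interacts with local expansions. The key observation to exploit is the relation $f(P^{\s}) = f^{\s^{-1}}(P)$ recorded in the preliminaries: evaluating $f$ at the image place $P^{\s}$ is the same as evaluating the conjugate function $f^{\s^{-1}}$ at $P$. First I would note that since $t^{\s} = t$ and $\nu_{P^{\s}}(t) = 1$, applying $\s^{-1}$ shows $\nu_{(P^{\s})^{\s^{-1}}}(t^{\s^{-1}}) = \nu_P(t) = 1$, so $t$ is indeed a legitimate common local parameter; this is the sanity check that makes the statement well-posed.

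Next I would argue inductively on the partial sums. Suppose $f = \sum_{i=0}^{\infty} a_i t^i$ is the local expansion of $f$ at $P^{\s}$, meaning $\nu_{P^{\s}}\bigl(f - \sum_{i=0}^{m} a_i t^i\bigr) \ge m+1$ for every $m \ge 0$. I want to deduce $\nu_P\bigl(f^{\s^{-1}} - \sum_{i=0}^{m} a_i t^i\bigr) \ge m+1$. The trick is that $\s^{-1}$ (as a field automorphism) sends the place $P^{\s}$ to $P$ and preserves valuations in the sense $\nu_{P}(g^{\s^{-1}}) = \nu_{P^{\s}}(g)$ for any $g \in F$. Apply this with $g = f - \sum_{i=0}^{m} a_i t^i$: since $a_i \in \F_q$ are fixed by $\s^{-1}$ and $t^{\s^{-1}} = t$ (because $t^{\s} = t$ implies $t = t^{\s^{-1}}$), we get $g^{\s^{-1}} = f^{\s^{-1}} - \sum_{i=0}^{m} a_i t^i$, and hence
\[
\nu_P\Bigl(f^{\s^{-1}} - \sum_{i=0}^{m} a_i t^i\Bigr) = \nu_{P^{\s}}\Bigl(f - \sum_{i=0}^{m} a_i t^i\Bigr) \ge m+1 .
\]
Since this holds for all $m \ge 0$, the remark quoted just before the lemma (that any power series satisfying these successive valuation inequalities is \emph{the} local expansion) tells us $\sum_{i=0}^{\infty} a_i t^i$ is the local expansion of $f^{\s^{-1}}$ at $P$, as claimed.

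The only genuinely delicate point is the valuation-transport identity $\nu_P(g^{\s^{-1}}) = \nu_{P^{\s}}(g)$, which needs a word of justification: an automorphism $\phi$ of $F/\F_q$ induces an isomorphism of the local ring at $P^{\phi}$ onto the local ring at $P$ (since $P^{\phi} = \{\phi(x) : x \in P\}$ by definition), so it carries a uniformizer to a uniformizer and multiplies valuations accordingly; taking $\phi = \s^{-1}$ and noting $(P^{\s})^{\s^{-1}} = P$ gives exactly what we need. The rest is bookkeeping about which elements are fixed by $\s$. I do not anticipate any real obstacle here — the lemma is essentially the assertion that conjugation by $\s$ is compatible with the ``change of place'' $P \leftrightarrow P^{\s}$ once a $\s$-invariant uniformizer is fixed, and the proof is a direct unwinding of the definitions combined with the uniqueness of local expansions.
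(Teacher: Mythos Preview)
Your argument is correct and is essentially identical to the paper's proof: both start from $\nu_{P^{\s}}\bigl(f-\sum_{i=0}^m a_i t^i\bigr)\ge m+1$, apply $\s^{-1}$ using the valuation-transport identity $\nu_P(g^{\s^{-1}})=\nu_{P^{\s}}(g)$ together with $t^{\s^{-1}}=t$ and $a_i\in\F_q$, and conclude by the characterization of local expansions. Your write-up is simply more detailed in justifying the valuation-transport step.
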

\begin{proof} By the definition of local expansion,  we have $\nu_{P^{\s}}\left(f-\sum_{i=0}^{m}a_it^i\right)\ge m+1$ for all $m\ge 0$. This gives $\nu_{(P^{\s})^{\s^{-1}}}\left((f-\sum_{i=0}^{m}a_it^i)^{\s^{-1}}\right)=\nu_{P}\left(f^{\s^{-1}}-\sum_{i=0}^{m}a_it^i\right)\ge m+1$ for all $m\ge 0$. The desired result follows.
\end{proof}

Now let $F$ be the function field constructed in Corollary \ref{2.3}. Then it satisfies the Property (P3). Assume that $U \neq\infty$ is a rational place of $E$ and $t\in E$ is a local parameter at $U$. Then $U$ can be viewed as an $\F_q$-rational point of $L=\F_q\cdot E$. Moreover, $U$ splits completely in $F/L$. We may assume that all rational places of $F$ lying over $U$ are $P,P^{\s},\dots,P^{\s^{e-1}}$,  where $\s$ is a generator of ${\rm Gal}(F/L)$.  It is clear that $t$ is a common local parameter of $P,P^{\s},\dots,P^{\s^{e-1}}$. Furthermore, we have $t^{\s}=t$ since $t\in E\subset L$.

To solve for the functions $f$ that satisfy the algebraic equation \eqref{eq:alg-eqn}, let us assume that $f=\sum_{i=1}^kf_iz_i$ for some $f_i\in\F_q$, where $k=le-g+1$ is the dimension of $\cL(lD)$. Solving for $f$  in  (\ref{eq:alg-eqn}) is equivalent to finding $\{f_i\}_{i=1}^k$. Assume that the local expansion of $z_i$ at $P^{\s^j}$ is given by $\sum_{h=0}^{\infty}\Ga_{ijh}t^h$.
 Then by  Lemma \ref{lem:conj-expansion}, $z_i^{\s^{-j}}$ have the local expansion $\sum_{h=0}^{\infty}\Ga_{ijh}t^h$ at $P$. Thus, $f^{\s^{-j}}$ has the local expansion $\sum_{i=1}^k\sum_{h=0}^{\infty}\Ga_{ijh}t^h$ at $P$. Furthermore assume that $A_i$ have  local expansions $\sum_{j=0}^{\infty}a_{ij}t^j$ at $P$ for $0\le i\le s$.  Substitute these local expansions in Equation \eqref{eq:alg-eqn}, we obtain an equation
\begin{equation}\label{eq:coeff-power}
c_0(f_1,f_2,\dots,f_k)+c_1(f_1,f_2,\dots,f_k)t+\dots+c_i(f_1,f_2,\dots,f_k)t^i+\dots=0,
\end{equation}
where $c_i(f_1,f_2,\dots,f_k)$ is a linear combination of $f_1,f_2,\dots,f_k$ for all $i\ge 0$.
Thus, each of the coefficients of the above power series (\ref{eq:coeff-power}) must be zero. This produces infinitely many linear equations $c_i(f_1,f_2,\dots,f_k)=0$ for $i\ge 0$ in variables $f_1,f_2,\dots,f_k$. This system of infinitely many linear equations is equivalent to the system
\begin{equation}\label{eq:coeff-sol}
c_i(f_1,f_2,\dots,f_k)=0 \qquad \mbox{for $i=0,1,\dots,le$}
\end{equation}
due to the fact that  $A_0+A_1f+\cdots+A_sf^{\s^{-(s-1)}}\in\cL(lD)$   and the following simple claim.
\begin{lemma}\label{lem:expansion-equal}
If $x$ is an element in $\cL(lD)$ and has a local expansion $\sum_{i=le+1}^{\infty}\Gl_it^i$ for some $\Gl_i\in\F_q$, then $x$ is identical to $0$ if $\Gl_i=0$ for all $i\le le$.
\end{lemma}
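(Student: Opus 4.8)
The plan is to deduce this immediately from the elementary fact that for any divisor $G$ with $\deg(G)<0$ one has $\cL(G)=\{0\}$, since a nonzero function of a global function field has a principal divisor of degree $0$ and hence cannot satisfy ${\rm div}(x)+G\ge 0$ when $\deg(G)<0$.

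So I would argue by contradiction: suppose $x\neq 0$. In the situation where this lemma is applied, $D$ is supported only on the places of $F$ lying above $\infty$, while $P$ lies above the rational place $U\neq\infty$ of $E$, so $P\notin\Supp(D)$ and every element of $\cL(lD)$ has an honest local expansion at $P$. The hypothesis that the local expansion of $x$ at $P$ has all coefficients $\Gl_i$ with $i\le le$ equal to $0$ is then precisely the statement $\nu_P(x)\ge le+1$. Combining this with ${\rm div}(x)+lD\ge 0$ yields ${\rm div}(x)+lD-(le+1)P\ge 0$, i.e. $x\in\cL\bigl(lD-(le+1)P\bigr)$. Since $\deg(D)=e$ by Property~(P1)(iii), this divisor has degree $l\deg(D)-(le+1)=le-(le+1)=-1<0$, forcing $x=0$, a contradiction.

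There is essentially no obstacle here — the only thing to keep track of is the bookkeeping $\deg(lD)=le$, which is exactly why the threshold $le$ occurs in the statement and why the infinite family of linear conditions arising from the power-series identity~(\ref{eq:coeff-power}) collapses to the finitely many conditions $c_i(f_1,\dots,f_k)=0$ for $0\le i\le le$ used by the decoder. (In fact the argument does not even need $P\notin\Supp(D)$: writing $n_P:=\nu_P(D)\ge 0$, the bound $\nu_P(x)\ge le+1\ge 0$ already gives $\nu_P(x)+l\,n_P-(le+1)\ge l\,n_P\ge 0$, so the containment $x\in\cL\bigl(lD-(le+1)P\bigr)$ holds regardless of whether $P$ lies in $\Supp(D)$.)
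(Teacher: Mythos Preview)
Your argument is correct and is essentially the paper's own proof: from the vanishing of the first $le+1$ coefficients of the local expansion one gets $\nu_P(x)\ge le+1$, hence $x\in\cL\bigl(lD-(le+1)P\bigr)$, and this space is $\{0\}$ since the divisor has degree $-1<0$. The extra contextual remarks you add (about $P\notin\Supp(D)$ and the relevance to collapsing the infinite system of equations) are accurate but not needed for the lemma itself.
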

\begin{proof} By the local expansion of $x$, we know that $x$ belongs to $\cL(lD-(le+1)P)$. The desired result follows from the fact that $\deg(lD-(le+1)P)=-1<0$.
\end{proof}
The equation system (\ref{eq:coeff-sol}) has $le+1$ equations and contains $k=le-g+1$ variables. Theorem \ref{thm:assum-list-size} guarantees that this system has at most $O(N^{(\sqrt{\ell}-1)s})$ solutions.

Given the discussion of encoding and decoding, we rewrite Theorem \ref{thm:assum-list-size} as the main result of this paper.
\begin{theorem} [Main]\label{thm:main} For any small $\Ge>0$ and a real $0<R<1$, one can construct a   folded algebraic geometric code over alphabet size $(1/\Ge)^{O(1/\Ge^2)}$ with rate $R$ and decoding radius $\tau=1-R-\Ge$ such that the length of the code tends to $\infty$ and is independent of $\Ge$. Moreover, the code is deterministically list decodable with a list size $O(N ^{1/\epsilon^2} )$.

Given a
polynomial amount of pre-processed information about the code, the
algorithm essentially consists of solving two linear systems over
$\F_q$, and thus runs in deterministic polynomial time.
\end{theorem}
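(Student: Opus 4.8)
The plan is to obtain Theorem~\ref{thm:main} by instantiating Theorem~\ref{thm:assum-list-size} with a concrete choice of the free parameters $\ell,s,m$ (and the Riemann--Roch parameter $l$) depending only on $\eps$, and then reading off from Sections~\ref{sec:ALD} and \ref{sec:ED} that the decoder amounts to solving two $\F_q$-linear systems of size polynomial in $N$ followed by enumerating a low-dimensional solution space. Beyond the already proved Theorem~\ref{thm:assum-list-size}, the only work is (a) the arithmetic turning its bound into the clean form $\tau=1-R-\eps$, alphabet $(1/\eps)^{O(1/\eps^2)}$, list $N^{O(1/\eps^2)}$; and (b) checking that each step of the decoding pipeline is deterministic polynomial time given polynomial-size preprocessed data.

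For the parameters, write $\delta:=2/(\sqrt\ell-1)$ for the genus penalty in Theorem~\ref{thm:assum-list-size}. First I would choose a square prime power $\ell$ with $\sqrt\ell\ge 1+6/\eps$ (so $\ell=O(1/\eps^2)$ and $\delta\le\eps/3$), then an integer $s$ with $s+1\ge 3/\eps$ (so $s=O(1/\eps)$), then an integer $m$ with $(s-1)/(m-s+1)\le\eps/3$ (so $m=O(1/\eps^2)$; the side requirement $m<e$ holds automatically once the base field $E$ has large enough genus, since $e=(\ell^r+1)/(\ell+1)$ grows with $N(E/\F_\ell)$). Using $s/(s+1)=1-1/(s+1)$ and $m/(m-s+1)=1+(s-1)/(m-s+1)$, the error fraction of Theorem~\ref{thm:assum-list-size} becomes
\[
\tau=\Bigl(1-\frac{1}{s+1}\Bigr)\Bigl(1-\bigl(1+\frac{s-1}{m-s+1}\bigr)(R+\delta)\Bigr),
\]
and since each of $1/(s+1)$, $(s-1)/(m-s+1)$, $\delta$ is at most $\eps/3$, a routine expansion gives $\tau\ge 1-R-\eps$ for all small $\eps$. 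The alphabet size is $q^m=\ell^{2m}\le(1/\eps)^{O(1/\eps^2)}$, the list size is $O(N^{(\sqrt\ell-1)s})=N^{O(1/\eps^2)}$, and the rate bound $\ge R$ together with the relative-distance bound $\ge 1-R-\delta\ge 1-R-\eps$ are inherited verbatim; taking the Garcia--Stichtenoth tower $\{E/\F_\ell\}$ with $g(E)\to\infty$ yields an infinite family with $N\to\infty$ over this fixed alphabet, so the length is not tied to $\eps$ (and a minor adjustment of $l$ makes the rate exactly $R$).

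For the algorithm, the decoder of Sections~\ref{sec:ALD}--\ref{sec:ED} consists of two linear-algebra stages over $\F_q$. Stage~1 is Lemma~\ref{lem:herm-interpolation}: after fixing $\F_q$-bases of $\cL(\kappa D)$ and $\cL((\kappa+l)D)$, the conditions \eqref{eq:interpolation-cond} are $N(m-s+1)$ homogeneous $\F_q$-linear equations in the $su+v$ coordinates of $A_0,\dots,A_s$, and Lemma~\ref{lem:herm-interpolation} guarantees more unknowns than equations, so Gaussian elimination returns a nonzero $Q$. Stage~2 solves \eqref{eq:alg-eqn}: writing $f=\sum_{i=1}^kf_iz_i$ in a fixed basis of $\cL(lD)$ and substituting the precomputed local expansions of the $z_i$ and of the now-known $A_j$ at a rational place $P$ of $F$ lying over a completely split place of $L$, equation \eqref{eq:alg-eqn} becomes, via Lemmas~\ref{lem:conj-expansion} and \ref{lem:expansion-equal}, the $le+1$ homogeneous $\F_q$-linear equations \eqref{eq:coeff-sol} in $f_1,\dots,f_k$; by Lemma~\ref{lem:first-list-size} with $T\subseteq S$ chosen as in the proof of Theorem~\ref{thm:assum-list-size}, the solution space is a subspace of $\F_q^k$ of size $N^{O(1/\eps^2)}$, hence of dimension $O(\log_q N)$, and can be listed in time polynomial in $N$. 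Both stages, plus filtering the listed $f$ by Hamming distance to the received word, are deterministic polynomial time.

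The polynomial-size preprocessed information the algorithm consumes is: $\F_q$-bases of $\cL(lD)$, $\cL(\kappa D)$, $\cL((\kappa+l)D)$; the values of the chosen basis of $\cL(lD)$ at the $Nm$ evaluation places $P_i^{\s^j}$; and the local expansions, to order $le$, of that basis at one rational place $P$ of $F$ above a completely split place of $L$ (the expansions of the conjugates $z_i^{\s^{-j}}$ used in Stage~2 are, by Lemma~\ref{lem:conj-expansion}, mere re-indexings of these). Section~\ref{sec:ED} explains how such data could in principle be produced from explicit defining equations for $F$. I expect the main point needing care to be not any isolated hard step but the bookkeeping that glues everything together: that some $l$ with $2g-1<le<Nm$ exists and yields the Riemann--Roch dimensions used in Lemma~\ref{lem:herm-interpolation}; that the threshold $t=1+\lfloor(\kappa+l)e/(m-s+1)\rfloor$ gives $\tau=1-t/N$ matching \eqref{eq:herm-error-frac} once the asymptotics $g/(Nm)\to(\sqrt\ell-1)/2$ are invoked; and that the constants absorbed into the alphabet and list exponents depend only on $\eps$, not on $R$ --- all of which follow from Lemmas~\ref{lem:para}, \ref{lem:herm-interpolation}, \ref{lem:Q-is-good}, \ref{lem:first-list-size} and Theorem~\ref{thm:assum-list-size} established above.
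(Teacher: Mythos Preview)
Your proposal is correct and follows essentially the same approach as the paper: instantiate Theorem~\ref{thm:assum-list-size} with $s\approx 1/\eps$, $m\approx 1/\eps^2$, $q=\ell^2\approx 1/\eps^4$, and then invoke the two-stage linear-algebraic decoder from Sections~\ref{sec:ALD}--\ref{sec:ED}. Your write-up is in fact more explicit than the paper's own proof, carrying out the arithmetic that turns the error bound into $1-R-\eps$ and itemizing the preprocessed data; the one small slip is calling the Stage~2 system \eqref{eq:coeff-sol} ``homogeneous'' --- it is affine because of the $A_0$ term, so the solution set is an affine subspace, but this does not affect the argument (and the paper's phrasing is equally loose here).
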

\begin{proof}
 In Theorem \ref{thm:assum-list-size},  choose $s\approx 1/\Ge$ and $m\approx 1/\Ge^2$ and $q\approx 1/\Ge^4$, the error fraction $\tau $ given in Theorem \ref{thm:assum-list-size} is  $1-R-\Ge$. The alphabet size of the folded code is $q^m$, which is $(1/\Ge)^{O(1/\Ge^2)}$ and the list size is $O(N^{(\sqrt{\ell}-1)s})=O(N ^{1/\epsilon^2} )$.
\end{proof}

\subsection{Computing A Basis of Riemann-Roch Space}
Both encoding and decoding described earlier depend on an algorithm to find a basis of the Riemann-Roch space  $\cL(lD)$. We divide this job into two steps. The first step is to  find an explicit equation defining  our function field $F$ constructed in Section \ref{FF}  through class field method. The second step is to  find a basis of our Riemann-Roch space based on the equation form Step 1.

In \cite{Hess02}, a polynomial algorithm of finding a basis of a Riemann-Roch space is given based on an explicit equations of the associated function field. If $F$ is of the form $\F_q(x,y)$ with a defining equation
\begin{equation}\label{eq:defining equation}
y^h+a_1(x)y^{h-1}+\cdots+a_{h-1}(x)y+a_h(x)=0
 \end{equation}
 with $a_i(x)\in\F_q[x]$, then \cite{Hess02} describes an algorithm with polynomial time in $h$, the divisor degree $le$ and $\Delta$, where $\Delta$ is the largest degree of $a_1(x),a_2(x),\dots,a_h(x)$ in (\ref{eq:defining equation}). Thus, if we can find an equation defining the field $F$ with $\Delta$ being a polynomial in the code length $N$, then \cite{Hess02} provides an polynomial algorithm in the code length $N$ to determine a basis of $\cL(lD)$.

Thus, to get a polynomial time encoding and decoding for our folded algebraic code, it is sufficient to obtain  polynomial time algorithms for
\begin{itemize}
\itemsep=1ex
\item[(i)] finding a defining equation (\ref{eq:defining equation}) of $F$ such that $\Delta$ is a polynomial in code length $N$;
\item[(ii)]  computing evaluations of  functions at rational places.
\end{itemize}

Part (ii) is usually easier. The key part is to find a defining equation of the underlying function field.  To see this, we start with the function field $E$ defined by the Garcia-Stichtenoth tower. Then one has $[E:\F_{\ell}(x)]\le N(E/\F_{\ell})$. Moreover, $E/\F_{\ell}(x)$ is a separable extension, thus there exists $\Gb\in E$ such that $E=\F_{\ell}(x,\Gb)$. Consequently, we have $L=\F_q\cdot E=\F_{q}(x,\Gb)$.

The paper \cite{DF12} describes a method to find an element $\Ga$ of $F$ such that $F=L(\Ga)$. Thus, $F=\F_q(x,\Ga,\Gb)$. Now the problem is how to find an element $y\in F$ such that $F=\F_q(x,\Ga,\Gb)=\F_q(x,y)$ with the defining equation given in (\ref{eq:defining equation}) and the maximum degree $\Delta$ is a polynomial in $N$.

We summarize what we discussed above into an open problem.
\begin{open}
Find a  polynomial time algorithm to construct an explicit equation {\rm (\ref{eq:defining equation})} of the function field $F$ given in Theorem {\rm \ref{2.3}} and compute a basis of the Riemann-Roch space efficiently.
\end{open}

\providecommand{\bysame}{\leavevmode\hbox to3em{\hrulefill}\thinspace}
\providecommand{\MR}{\relax\ifhmode\unskip\space\fi MR }
\providecommand{\MRhref}[2]{%
  \href{http://www.ams.org/mathscinet-getitem?mr=#1}{#2}
}
\providecommand{\href}[2]{#2}


\end{document}